\newtheorem{thm}{Theorem}[section] 
\newtheorem{theorem}[thm]{Theorem}
\newtheorem{corollary}[thm]{Corollary}
\newtheorem{proposition}[thm]{Proposition}
\newtheorem{lemma}[thm]{Lemma}
\theoremstyle{definition} 
\newtheorem{definition}[thm]{Definition}
\newtheorem{example}[thm]{Example}
\theoremstyle{remark}
\newtheorem{remark}[thm]{Remark}
\newtheorem{notation}[thm]{Notation}
\numberwithin{equation}{section}
\DeclareMathOperator{\Hom}{Hom}
\DeclareMathOperator{\Aut}{Aut}
\DeclareMathOperator{\GL}{GL}
\DeclareMathOperator{\vol}{vol}
\DeclareMathOperator{\R}{\mathds{R}}
\DeclareMathOperator{\C}{\mathds{C}}
\DeclareMathOperator{\CP}{\mathds{P}}
\DeclareMathOperator{\Q}{\mathds{Q}}
\DeclareMathOperator{\Z}{\mathds{Z}}
\newcommand*\bigcdot{\mathpalette\bigcdot@{.5}}
\newcommand*\bigcdot@[2]{\mathbin{\vcenter{\hbox{\scalebox{#2}{$\m@th#1\bullet$}}}}}
\title[Two Non-Vanishing results \linebreak concerning the Anti-Canonical Bundle]{Two Non-Vanishing results \linebreak concerning the Anti-Canonical Bundle}
\author{Niklas M\"uller}
\address{Department of Mathematics, Universit\"at Duisburg-Essen,
Thea-Leymann-Str. 9, 45127 Essen, Germany.}
\email{{\tt niklas.mueller@uni-duisburg-essen.de}}
\subjclass[2020]{Primary 14E30, Secondary 14M25, 14J30.}
\keywords{While working on this project, the author was supported by the DFG Research Training Group 2553 Symmetries and Classifying Spaces: Analytic, Arithmetic and Derived}
\date{\today}
\begin{document}

\begin{abstract}

    Let $(X, \Delta)$ be a klt threefold pair with nef anti-log canonical divisor $-(K_X+\Delta)$. We show that $\kappa(X, -(K_X+\Delta))\geq 0$. To do so, we prove a more general equivariant non-vanishing result for anti-log canonical bundles, which is valid in any dimension.
    
\end{abstract}

\maketitle

\renewcommand{\thethm}{\Alph{thm}}
    \setcounter{tocdepth}{1}
    \tableofcontents

    \section{Introduction}

    \noindent
    Let $(X, \Delta)$ be a klt pair with nef log-canonical divisor $K_X + \Delta$. Recall that the celebrated \emph{Non-Vanishing conjecture} predicts that $\kappa(X, K_X + \Delta) \geq 0$, i.e.\ that there exists an integer $m\geq 1$ such that $m(K_X + \Delta)$ is Cartier and
       \begin{align*}
            H^0\Big(X, \mathcal{O}_X\big(m(K_X + \Delta) \big)\Big) \neq 0.
      \end{align*}
    Along with the \emph{Abundance conjecture}, which predicts that $K_X + \Delta$ is even semiample, the Non-Vanishing conjecture has attracted much attention.
    One might be tempted to ponder what happens on the opposite end of the spectrum, namely when the anti-log canonical divisor $-(K_X + \Delta)$ is nef. One quickly realises that the situation is somewhat different: Already the example of $\CP^2$ blown-up in $9$ points shows that $-K_X$ might be nef but fail to be semiample, see \cite{koike_UedaTheoryCounterExample}. On the other hand, in \cite{PeternellBauer_NefReductionAndACBundles} the authors classified rather explicitly smooth projective threefolds with nef anti-canonical class and observed a posteriori that $\kappa(X, -K_X)\geq 0$, at least when $X$ is rationally connected, see also \cite{xie_RCThreefoldsNefAcBundle} and \cite{xie_RCThreefoldsNefAcBundle2}. 

    Recently, Lazi\'c-Matsumura-Peternell-Tsakanikas-Xie \cite{lazic_NumericalNonVanishing} studied varieties with nef anticanonical bundle with a view towards the recently proposed generalised Non-Vanishing conjecture \cite{lazicPeternell_GeneralisedNonVanishing}, \cite{han_NumericalNonvanishingSurfacPairs}. In particular, they proved that $\kappa(X, -(K_X+\Delta))\geq 0$ if $(X, \Delta)$ is a rationally connected threefold pair with nef anti-log canonical bundle and they asked whether the same conclusion holds more generally, assuming only that $-(K_X + \Delta)$ is nef. In this paper we want to give the following affirmative answer to this question:
    
    \begin{theorem}
        Let $(X, \Delta)$ be a projective klt pair and assume that the anti-log canonical divisor $-(K_X + \Delta)$ is nef. Let $(F, \Delta_F)$ denote a general fibre of the MRC-fibration of $X$. If $-(K_F + \Delta_F)$ is semiample, then
        \begin{align*}
            \kappa(X,-(K_X + \Delta)) \geq 0.
        \end{align*}
        \label{Intro:Anti-Non-Vanishing}
    \end{theorem}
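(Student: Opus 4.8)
The strategy is to reduce to the MRC fibration $f\colon X \dashrightarrow Z$ and combine the hypothesis on the general fibre with a non-vanishing (or positivity) statement on the base $Z$. First I would take a resolution so that $f$ becomes a morphism; after replacing $X$ by a suitable birational model and adjusting $\Delta$ (using that $-(K_X+\Delta)$ nef pulls back to an effective exceptional modification of a nef class, and that klt-ness and the relevant Kodaira-dimension inequalities are birationally robust), I may assume $f\colon X\to Z$ is a morphism with $Z$ not uniruled and $(F,\Delta_F)$ the general klt fibre. The base $Z$ is not uniruled, so by the recent progress toward abundance (Miyaoka-type results, or Boucksom-Demailly-P\u{a}un-Peternell) its canonical class $K_Z$ is pseudo-effective; in fact one wants $\kappa(Z,K_Z)\geq 0$, which holds at least when $\dim Z\leq 3$ and conjecturally in general.

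Next I would run a canonical bundle formula / positivity-of-direct-images argument along $f$. Writing $-(K_X+\Delta) = f^\ast(\text{something}) + (\text{vertical/exceptional})$ is too naive; instead, push forward powers of $\mathcal{O}_X(m(-(K_X+\Delta)))$. The key point is that $-(K_F+\Delta_F)$ semiample gives, for suitable divisible $m$, that $f_\ast\mathcal{O}_X(-m(K_X+\Delta))$ is nonzero and, after the Fujino-Mori canonical bundle formula applied to $(X,\Delta)\to Z$, one obtains a klt pair $(Z,\Delta_Z)$ with $-(K_Z+\Delta_Z)$ related to $-(K_X+\Delta)$. Concretely: since $-(K_X+\Delta)$ is nef and $f$-trivial would force $-(K_F+\Delta_F)$ numerically trivial — which is the easy case where one concludes by the fibration formula directly — one generally has $-(K_X+\Delta)$ big on fibres or semiample on fibres; semiampleness of $-(K_F+\Delta_F)$ lets one take a relative Iitaka/ample model and descend. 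The upshot should be an inequality $\kappa(X,-(K_X+\Delta)) \geq \kappa(F,-(K_F+\Delta_F)) + \text{(a non-negative base contribution)}$, where the base contribution is $\geq 0$ because $Z$ is not uniruled (so $K_Z$ pseffective, and the boundary $\Delta_Z$ arising from the moduli/discriminant part of the canonical bundle formula is effective, making $K_Z+\Delta_Z$ pseffective, hence its negative cannot help but also cannot obstruct). The sign bookkeeping here is the delicate part.

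Because $-(K_X+\Delta)$ is \emph{nef}, I expect the cleanest route is actually the one flagged in the abstract: prove an equivariant non-vanishing statement and apply it to the Galois/Stein factorization structure of the MRC fibration together with the group acting on the generic fibre's ample model. That is, one shows: if $-(K_F+\Delta_F)$ is semiample then its anticanonical ring is finitely generated with a section preserved (up to scaling) by the monodromy of the family over $Z$, hence the corresponding relative linear system descends to a linear system on $X$ after twisting by a pullback from $Z$; combined with $K_Z$ (or $K_Z+\Delta_Z$) pseudo-effective — equivalently $Z$ non-uniruled — the twist contributes a pseudo-effective class, and nefness of $-(K_X+\Delta)$ forces this pseudo-effective class plus an effective relative part to be effective (a nef class which is pseudo-effective modulo an effective divisor, with numerical dimension controlled, is effective).

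The main obstacle, as I see it, is the \textbf{base contribution}: showing that the non-uniruledness of $Z$ genuinely produces an \emph{effective} (not merely pseudo-effective) class to absorb into the anticanonical system, i.e.\ upgrading $K_Z$ pseudo-effective to $\kappa(Z, K_Z + \Delta_Z)\geq 0$ for the boundary $\Delta_Z$ coming from the canonical bundle formula. In arbitrary dimension this is exactly (a case of) the Non-Vanishing conjecture on $Z$, so one must either restrict the statement or invoke known cases; the reason the theorem can still be stated in all dimensions is presumably that nefness of $-(K_X+\Delta)$ combined with semiampleness on fibres constrains the discriminant/moduli $\mathbf{b}$-divisor on $Z$ so strongly (e.g. it forces $-(K_Z+\Delta_Z)$ nef as well, reducing to abundance-type input on $Z$ which is available because $Z$ is non-uniruled and one may induct on dimension via its own MRC fibration — which is trivial). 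I would therefore structure the proof as an induction on $\dim X$, the base of the induction being the non-uniruled case (where $X=Z$ and one cites non-vanishing for varieties with pseudo-effective $K_X+\Delta$ and nef $-(K_X+\Delta)$, forcing $K_X+\Delta\equiv 0$ and concluding), and the inductive step being the fibration argument above; the semiampleness hypothesis on $(F,\Delta_F)$ is what makes the descent along the MRC fibration go through, and non-uniruledness of $Z$ is what feeds the induction.
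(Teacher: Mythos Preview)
Your proposal identifies the right shape of the argument in one paragraph --- the ``equivariant non-vanishing applied to the monodromy on the generic fibre'' route --- but then buries it under a canonical-bundle-formula/induction framework that faces precisely the obstacle you flag and that the paper never needs to confront. The genuine gap is that you are missing the structural input that makes the base contribution disappear entirely.

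The paper's proof rests on the Matsumura--Wang structure theorem (Theorem~\ref{StructureVarietiesNefACBundle}): after a finite quasi-\'etale cover, the MRC fibration becomes an everywhere-defined morphism $f\colon X'\to Y$ which is a \emph{locally constant fibration}, i.e.\ $(X',\Delta')\cong(\widetilde{Y}\times(F,\Delta_F))/\pi_1(Y)$, and moreover $K_Y\sim_{\mathds{Q}}0$. So there is no ``upgrading $K_Z$ pseudo-effective to $\kappa\geq 0$'' issue at all: the base canonical class is already torsion, and no induction or abundance-type input on the base is required. Your worry about the base contribution is real for a generic fibration argument, but it simply evaporates here once one invokes this theorem. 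Without it, your induction would indeed need the Non-Vanishing conjecture on $Z$, which is open.

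Given the locally constant structure, $f_*\mathcal{O}_{X'}(-m(K_{X'}+\Delta'))$ is the flat bundle with fibre $H^0(F,-m(K_F+\Delta_F))$ and monodromy $\rho\colon\pi_1(Y)\to\Aut(F,\Delta_F)$; global sections on $X'$ are exactly $\rho$-invariant sections on $F$. After further quasi-\'etale covers one reduces to $Y$ an abelian variety, so $\pi_1(Y)$ is abelian and the Zariski closure $H$ of $\mathrm{Im}\,\rho$ is a connected commutative linear algebraic group (using that $F$ is rationally connected, hence $\Aut^0(F)$ is linear). This is Theorem~\ref{criterionNonVanishing} in the paper. Theorem~C then supplies $H$-invariant sections of $-m(K_F+\Delta_F)$ from the semiampleness hypothesis, and these lift to $X$. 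Your sketch gestures at ``monodromy'' and ``Galois/Stein factorization'' but does not isolate the two facts that actually do the work: the locally constant structure and $K_Y\sim_{\mathds{Q}}0$.
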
 
    Despite giving only a partial answer, Theorem \ref{Intro:Anti-Non-Vanishing} applies in many cases. For example, as can be seen by using \cite{hausen_CoxRingVarietieswithTorusAction}, it applies whenever $F$ is a surface with enough automorphisms. Thus, combining our result with \cite{han_NumericalNonvanishingSurfacPairs} and \cite{lazic_NumericalNonVanishing} to deal with the other cases enables us to fully settle the three-dimensional case:
    \begin{corollary}
    \label{IntroCorollary}
        Let $(X, \Delta)$ be a projective klt threefold pair such that $-(K_X + \Delta)$ is nef. Then 
        \begin{align*}
            \kappa(X,-(K_X + \Delta)) \geq 0.
        \end{align*}
    \end{corollary}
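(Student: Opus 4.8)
The plan is to derive Corollary~\ref{IntroCorollary} from Theorem~\ref{Intro:Anti-Non-Vanishing} (and from the more general equivariant non-vanishing statement proved in the body of the paper) by a case analysis on the dimension $d$ of the general fibre $(F,\Delta_F)$ of the MRC-fibration $\phi\colon X\dashrightarrow Z$ of $X$; here $Z$ is non-uniruled and $d=\dim X-\dim Z\in\{0,1,2,3\}$. First I would pass to a crepant birational model on which $\phi$ is a morphism: this changes neither $\kappa(X,-(K_X+\Delta))$ nor the hypothesis, and adjunction equips the general fibre with a klt pair $(F,\Delta_F)$ for which $-(K_F+\Delta_F)=-(K_X+\Delta)|_F$ is nef. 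By Theorem~\ref{Intro:Anti-Non-Vanishing} it then suffices, in each case, to check that $-(K_F+\Delta_F)$ is semiample, with two exceptions: when $X$ is itself rationally connected one cannot prove this (it would be as strong as abundance) and must argue separately, and when the surface fibre has nef but non-semiample anti-log canonical bundle one instead produces a canonically defined --- hence monodromy-invariant --- effective $\mathbb{Q}$-divisor or Iitaka fibration for it and feeds this into the equivariant version of the theorem.

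The cases $d\in\{0,1,3\}$ are immediate. If $d=0$ then $F$ is a point, $-(K_F+\Delta_F)=0$ is semiample, and Theorem~\ref{Intro:Anti-Non-Vanishing} applies; concretely $X$ is not uniruled, so $K_X+\Delta$ is pseudo-effective, and together with the nefness of $-(K_X+\Delta)$ this forces $K_X+\Delta\equiv 0$, hence $\mathbb{Q}$-torsion. If $d=1$ then $F\cong\mathbb{P}^1$, so $-(K_F+\Delta_F)$ is a nef $\mathbb{Q}$-divisor on a rational curve, hence of non-negative degree and therefore semiample, and Theorem~\ref{Intro:Anti-Non-Vanishing} applies. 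If $d=3$ then $X$ is rationally connected and the statement is precisely the theorem of Lazi\'c--Matsumura--Peternell--Tsakanikas--Xie \cite{lazic_NumericalNonVanishing} for rationally connected threefold pairs.

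The substantial case is $d=2$, in which $Z$ is a non-uniruled curve and $(F,\Delta_F)$ is a klt rationally connected --- hence rational --- surface pair with $D:=-(K_F+\Delta_F)$ nef. If $D$ is big then $D-(K_F+\Delta_F)=2D$ is nef and big, so the base-point-free theorem for the klt pair $(F,\Delta_F)$ makes $D$ semiample (indeed $(F,\Delta_F)$ is then a weak log del Pezzo pair, in particular a Mori dream space), and Theorem~\ref{Intro:Anti-Non-Vanishing} applies. If $D$ is not big then $D^2=0$; if $F$ carries enough automorphisms --- for instance a non-trivial torus action --- finite generation of its Cox ring via \cite{hausen_CoxRingVarietieswithTorusAction} again yields semiampleness of $D$. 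For the remaining rational surfaces, on which $D$ can be nef without being semiample ($\mathbb{P}^2$ blown up in nine points of a cubic being the prototype), I would invoke \cite{han_NumericalNonvanishingSurfacPairs} to obtain $\kappa(F,D)\ge 0$; considering the possible values $\kappa(F,D)\in\{0,1,2\}$ and the corresponding Iitaka fibrations, $D$ is then either big (already handled), or has $\kappa(F,D)=0$ with a unique effective member in its $\mathbb{Q}$-linear system, or possesses a well-defined Iitaka fibration onto a curve. In each alternative the relevant datum depends only on $(F,\Delta_F)$ and is hence invariant under the monodromy of the family $\{F_z\}_{z\in Z^\circ}$ over the smooth locus of $\phi$, so the equivariant non-vanishing argument of the body of the paper descends an effective $\mathbb{Q}$-divisor to $X$; here the non-uniruledness of $Z$ guarantees that the vertical term picked up along the base is pseudo-effective.

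I expect the main obstacle to be precisely this last sub-case of $d=2$: passing from the non-vanishing $\kappa(F,D)\ge 0$ on a surface fibre with non-semiample $D$ to non-vanishing on the total space forces one to exploit the canonical, monodromy-invariant nature of the effective divisor or Iitaka fibration supplied by \cite{han_NumericalNonvanishingSurfacPairs} and to carry out a positivity analysis over $Z$ confirming that the descended section still represents $-(K_X+\Delta)$ up to an effective $\mathbb{Q}$-divisor. A smaller, more routine point is to check that the adjunction conventions defining $\Delta_F$ on the general fibre of the a priori only rational MRC-fibration match the hypotheses of all the cited results.
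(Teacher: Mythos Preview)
Your case analysis on $d=\dim F$ matches the paper's, and the cases $d\in\{0,1,3\}$ are handled correctly and essentially as in the paper. The difference, and the gap, lies in how you organise the surface case $d=2$.

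The paper does not split according to properties of $D=-(K_F+\Delta_F)$ but according to the maximal torus $T\subseteq\Aut(F,\Delta_F)$, invoking the criterion of Theorem~\ref{criterionNonVanishing}: what one must check is $\kappa(F,-(K_F+\Delta_F))^T\geq 0$. If $T$ is trivial this is just ordinary non-vanishing on a klt surface pair, supplied by \cite{han_NumericalNonvanishingSurfacPairs} (or \cite{lazic_NumericalNonVanishing}); no descent argument is needed, because Theorem~\ref{criterionNonVanishing} already packages the passage from fibre to total space. If $\dim T\geq 1$ then, after an equivariant log resolution, $F$ is a smooth rational surface with a nontrivial torus action, hence a complexity-one $T$-variety, hence a Mori dream space by \cite{hausen_CoxRingVarietieswithTorusAction}; so $D$ is semiample and Theorem~\ref{Intro:Equivariant Non-Vanishing} applies.

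Your ``remaining'' sub-case, by contrast, tries to descend a canonically determined effective divisor or Iitaka fibration from $F$ to $X$ via monodromy. This is where the gap sits: a monodromy-invariant \emph{divisor} on $F$ is not the same as a monodromy-invariant \emph{section}, since the monodromy can act on a one-dimensional $H^0(F,\mathcal{O}_F(mD))$ by a nontrivial character; and even granting an invariant divisor, showing that the descended divisor on $X$ is $\mathbb{Q}$-linearly equivalent to $-(K_X+\Delta)$ globally (rather than merely relatively over the base) is precisely the ``positivity analysis over $Z$'' you yourself flag as an obstacle and do not carry out. The paper sidesteps all of this: once you observe that a rational surface on which some nef divisor fails to be semiample cannot carry a nontrivial torus action (else it would be a Mori dream space), you are automatically in the $T=\{1\}$ branch of the dichotomy, and Theorem~\ref{criterionNonVanishing} together with \cite{han_NumericalNonvanishingSurfacPairs} finishes at once. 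Note also that the paper does not simply resolve the MRC map to a morphism as you propose; it invokes the structure theorem of \cite{matsumuraWang_NefAnticanonicalBundle} to pass, after a quasi-\'etale cover, to a locally constant fibration over a base with $K_Y\sim_{\mathbb{Q}}0$, which is what makes Theorem~\ref{criterionNonVanishing} available in the first place.
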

   Besides the results obtained in \cite{cao_NefAnticanonicalBundleIII} and \cite{matsumuraWang_NefAnticanonicalBundle}, which form the backbone of our strategy, the main ingredient in our proof of Theorem \ref{Intro:Anti-Non-Vanishing} is the following `equivariant Non-Vanishing theorem', which allows us to lift sections from $F$ to $X$ as explained in Section \ref{sec:ReductionOfAtoB}. It holds in greater generality and might be of independent interest:
    \begin{theorem}\emph{(Equivariant Non-Vanishing for Anti-Canonical Divisors)}

        \noindent
        Let $(X, D)$ be a projective sub-log canonical pair and assume that $-(K_X + D)$ is semiample. Then for any commutative, linear algebraic subgroup $H\subseteq \Aut(X, D)$ there exists an integer $m\geq 1$ such that $-m(K_X + D)$ is Cartier and
        \begin{align*}
            H^0\Big(X, \mathcal{O}_X\big({-}m(K_X + D)\big)\Big)^H 
            \neq 0.
        \end{align*}
        We will abbreviate this by writing $\kappa(X, -(K_X + D))^H\geq 0$.\label{Intro:Equivariant Non-Vanishing}
    \end{theorem}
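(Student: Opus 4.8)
The plan is to pass to the ample model of $-(K_X+D)$ and there exploit that the bundle in question is anti-canonical. Since $-(K_X+D)$ is semiample it defines a contraction $\phi\colon X\to Y$ with $\phi_*\CO_X=\CO_Y$ and $-(K_X+D)\sim_{\Q}\phi^*A$ for an ample $\Q$-divisor $A$ on $Y$; because the linear systems $|{-}m(K_X+D)|$ are intrinsic to $(X,D)$, the $H$-action descends to $Y$ so that $\phi$ is equivariant and $A$ inherits an $H$-linearization compatible with the canonical one, and the projection formula gives $H^0(X,-m(K_X+D))^H\cong H^0(Y,mA)^H$ for $m$ sufficiently divisible. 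The canonical bundle formula for $\phi$, together with uniqueness of its terms, identifies $A\sim_{\Q}-(K_Y+B_Y+M_Y)$ with $(Y,B_Y+M_Y)$ a generalized sub-lc pair whose boundary $B_Y$ and nef part $M_Y$ are $H$-invariant. Replacing $(X,D)$ by $(Y,B_Y+M_Y)$ we may thus assume $-(K_X+D)$ is \emph{ample} (now for a generalized pair); in particular $R:=\bigoplus_m H^0(X,-m(K_X+D))$ is nonzero in every large degree, so it suffices to show $R^{H}$ is nonzero in some positive degree. Writing $H=T\cdot U$ with $T$ its maximal torus and $U$ its commutative unipotent part, and absorbing $\pi_0(H)$ for last, this is now the whole content.

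The unipotent part and the component group are formal once the torus is handled. As $T$ and $U$ commute, $U$ preserves each $T$-weight subspace of $H^0(X,-m(K_X+D))$; since every nonzero rational representation of the unipotent group $U$ has a nonzero $U$-fixed vector, it follows that if the $T$-weight-zero subspace $H^0(X,-m(K_X+D))_{0}$ is nonzero for some $m$, then a nonzero $U$-fixed vector in it is fixed by all of $H=T\cdot U$. And once $R^{T\cdot U}$ is nonzero in positive degree it is a positive-dimensional graded domain, so its invariants under the finite abelian group $\pi_0(H)$ are again nonzero in positive degree. Hence \textbf{everything reduces to showing that the $T$-weight-zero subspace of $H^0(X,-m(K_X+D))$ is nonzero for some $m\geq 1$}, i.e.\ that $0$ lies in the moment polytope of $(X,-(K_X+D))$ for the canonical $T$-linearization — a statement that is false for a general $T$-linearized ample bundle and which must use anti-canonicity.

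After an equivariant resolution (harmless for $H^0$, and keeping the bundle semiample and big) we may assume $X$ smooth and $(X,D)$ sub-lc with $D$ a possibly non-effective $\Q$-divisor. Fix a general one-parameter subgroup $\lambda\colon\mathbb{G}_m\hookrightarrow T$ and let $F^{+},F^{-}\subseteq X^{\lambda}$ be its Bia{\l}ynicki--Birula source and sink, so that — the action being nontrivial — $N_{F^{+}/X}$ has all $\lambda$-weights strictly positive and $N_{F^{-}/X}$ all $\lambda$-weights strictly negative. Since $T$ fixes each component $Z\subseteq X^{\lambda}$ pointwise it acts trivially on $\omega_Z$, so adjunction $\omega_Z\cong\omega_X|_Z\otimes\det N_{Z/X}$ shows that the $T$-weight of $-(K_X+D)|_Z$ equals $\sum(\text{weights of }N_{Z/X})$ minus the $T$-weight of $\CO_X(D)|_Z$; pairing with $\lambda$, the first summand is $>0$ on $F^{+}$ and $<0$ on $F^{-}$. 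As $|{-}m(K_X+D)|$ is basepoint-free for $m$ sufficiently divisible, its restriction to each of $F^{+},F^{-}$ is nonzero, and since $\lambda$ acts on $H^0(F^{\pm},-m(K_X+D)|_{F^{\pm}})$ through the single weight $m$ times the quantity above, this exhibits $\lambda$-weights of both signs in $H^0(X,-m(K_X+D))$. Letting $\lambda$ vary shows $0$ lies in the interior of the moment polytope; by positivity of the equivariant Riemann--Roch / Duistermaat--Heckman polynomial in the interior of the moment polytope (applicable since the bundle is big) every interior lattice point of the $m$-th dilate occurs as a $T$-weight of $H^0(X,-m(K_X+D))$ for $m$ sufficiently divisible, so the $T$-weight-zero subspace is nonzero, as required.

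The main obstacle is the sign bookkeeping in the last step: one must control the $T$-weight of $\CO_X(D)|_{F^{\pm}}$ — equivalently, once the resolution is undone, the contributions of the boundary $B_Y$ and the nef part $M_Y$ of the generalized pair on the (possibly singular) ample model — which need not be effective in the sub-lc generality, so the naive inequality $\mu(F^{+})(\lambda)>0>\mu(F^{-})(\lambda)$ is not immediate. Pinning down the correct strict signs, via (sub-)adjunction along the source and sink, the discrepancies introduced by the equivariant resolution, and the nefness of $-(K_X+D)$, is the delicate point; granting it, the rest of the argument is formal.
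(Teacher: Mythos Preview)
Your proposal has a genuine gap at precisely the point you yourself flag, and it is not merely bookkeeping: the strict inequalities you need are \emph{false} in general. Take $X=\CP^1$ with the standard $\C^\times$-action and $D=\{0\}$; then $(X,D)$ is log canonical, $-(K_X+D)\cong\mathcal{O}_{\CP^1}(1)$ is ample, and the moment polytope is the interval $[0,1]$ (cf.\ Example~\ref{ex:LogCanonicityIsOptimal} with $a=1$). Thus $0$ lies on the \emph{boundary}, not in the interior, and the $\lambda$-weight of $-(K_X+D)$ at the source is $0$, not strictly positive. Your appeal to Duistermaat--Heckman ``in the interior of the moment polytope'' therefore does not apply and the argument breaks. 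The reason is transparent once the weight is written out: at a fixed point $y$ with cotangent weights $\nu_1,\ldots,\nu_n$ and $D=\sum\delta_iD_i$ having snc support through $y$, one has $\mu=-\sum_i(1-\delta_i)\nu_i$ (Proposition~\ref{ExampleComputationLocalWeights}); sub-log canonicity gives only $1-\delta_i\geq 0$, so at the source (where all $\langle\lambda,\nu_i\rangle\leq 0$) one obtains $\langle\lambda,\mu\rangle\geq 0$, with equality whenever every normal direction is tangent to a component of $D$ of coefficient $1$.

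The paper's argument sidesteps this by a different convex-geometry mechanism. Rather than bracketing $0$ strictly between source and sink weights for varying $\lambda$, it shows at \emph{every} fixed component that $-\mu$ is a nonnegative combination of the cotangent weights $\nu_i$; since locally near the vertex $\mu$ the moment polytope coincides with the cone spanned by the $\nu_i$ (this is where ampleness is used, via \cite[Corollary~2.14]{wisniewski_AlgebraicTorusActionsOnContactManifolds}), one obtains $(1-\varepsilon)\mu\in P_\mu$ for all small $\varepsilon>0$. An elementary lemma (Proposition~\ref{Convex Geometry}) then forces $0\in P_\mu$, with no interiority claimed; the conversion from $0\in P_\mu$ to an actual invariant section uses basepoint-freeness and rationality of the polytope (Corollary~\ref{PropertiesSectionMomentPolytope Part 2}), not Duistermaat--Heckman. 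The reduction from semiample to ample is handled by the simple perturbation $-(K_X+D)+\varepsilon A$ with $A$ a $T$-invariant ample divisor, followed by a Hausdorff limit (Corollary~\ref{Equivariant NonVanishing: Semiample case almost}); your detour through the canonical bundle formula and generalised pairs is an unnecessary complication, and the moduli part $M_Y$ being only a b-divisor class makes the local weight computation you would then need unclear.
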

    Here, $\Aut(X, D)\subseteq \Aut(X)$ denotes the subgroup of automorphisms $\varphi \in \Aut(X)$ leaving $D$ invariant as a subset of $X$. Then $\Aut(X, D)$ acts on $H^0(X, \mathcal{O}_X(-m(K_X + D))\big))$ in a natural way and we denote by $H^0(X, \mathcal{O}_X({-}m(K_X + D)))^H $ the space of sections which are fixed under this action. For our terminology concerning pairs we refer the reader to Subsection \ref{subsec:Conventions} below.

    We would like to emphasise that the conclusion of Theorem \ref{Intro:Equivariant Non-Vanishing} is astonishing to the author in several different ways: First, perhaps somewhat unexpectedly, the crucial assumption in Theorem \ref{Intro:Equivariant Non-Vanishing} is the log canonicity and not the semipositivity of $-(K_X + D)$, as the conclusion of Theorem \ref{Intro:Equivariant Non-Vanishing} can fail even if $(X, D)$ is log smooth and if $-(K_X + D)$ is ample as soon as $(X, D)$ is no longer sub-log canonical, see Example \ref{ex:LogCanonicityIsOptimal}. 

    Moreover, it is noteworthy that the action of $H$ on the space of sections of $-m(K_X+D)$ is usually far from being trivial and already in very simple examples the ring of invariant sections of $-(K_X + D)$ might be much smaller than the ring of all sections, cf.\ Example \ref{ex:InvariantKodDimSmallerThanKodDim}. Nevertheless, Theorem \ref{Intro:Equivariant Non-Vanishing} shows that there always do exist invariant sections. In particular, in case $\kappa(X, -(K_X + D)) = 0$ the action \emph{must} be trivial after all. 

    Finally, note that the conclusion of Theorem \ref{Intro:Equivariant Non-Vanishing} certainly fails if one allows for more general subgroups of $\Aut(X, D)$, cf.\ Example \ref{ex:EquivKodDim-Infty}. 

    \subsection{Overview of the methods used in the proof of Theorem C}

    By far the most important special case of Theorem \ref{Intro:Equivariant Non-Vanishing} is the one where $(X, D)$ is log smooth and $H \cong (\C^\times)^r$ is an algebraic torus. Indeed, the general case is readily reduced to this one; the precise argument is contained in Section \ref{sec:Proof Of Main Results}.

    Under these assumptions the main idea of the proof is to convert the global question of whether $\kappa(X, -(K_X + D))^H\geq 0$ into a problem in convex geometry determined by the local action of $H$ near the fixed points in $X$. This method of computing global invariants of an action is well-known and heavily used in Geometric Invariant Theory and Symplectic Reduction. However, it is classically only developed for ample line bundles, while we will also need it for semiample ones. This case seems to have received little - if any - interest so far. Consequently, we re-prove the statements we need in detail. 

    In the log Fano case, the result boils down to a combination of some explicit local computations, some classical results about moment polytopes and a little piece of convex geometry. The semiample case then follows via some perturbation techniques. All of this will be explained in detail in Section \ref{sec:TorusInvariants}.

    \subsection{Conventions}
    \label{subsec:Conventions}

    Throughout the paper we work over the field $\C$ of complex numbers. 
    
    Since the terminology concerning pairs is slightly ambiguous at times in the literature, let us fix explicitly the notation we will use: For us, a \emph{pair} $(X, D)$ consists of a normal variety $X$ and a $\Q$-Weil divisor $D = \sum a_i D_i$ on $X$ with the property that $K_X + D$ is $\Q$-Cartier. Here, $D$ may or may not be effective.
    
    We call $(X, D)$ \emph{log smooth} if $X$ is smooth and if $D$ has SNC support. A log smooth pair $(X, D)$ will be called \emph{sub-log canonical} if $a_i \leq 1$ for all $i$, and \emph{log canonical} if additionally $D$ is required to be effective, i.e.\ $0\leq a_i \leq 1$. In general, a not necessarily log smooth pair $(X, D)$ is called (sub-)log canonical if for some/any log resolution $f\colon \hat{X} \rightarrow X$ the log smooth pair $(\hat{X}, \hat{D})$ is so, where $\hat{D}$ is determined by the rules $K_{\hat{X}} + \hat{D} \sim_{\Q} f^*(K_X + D)$ and $f_*\hat{D} = D$. Similarly, in the definition of a \emph{sub-klt} pair we do not require $D$ to be effective, while in the definition of a \emph{klt} pair we do. In other words, our definition of a sub-log canonical pair coincides precisely with the definition of a log canonical pair in \cite{kollar_BirationalGeometry}; similarly for (sub-)klt pairs.

    % ------------------------------------------------------------------------------------ %											
    %                                   Main text									       %
    
    % ------------------------------------------------------------------------------------ %

    \renewcommand{\thethm}{\arabic{section}.\arabic{thm}}

    % ------------------------------------------------------------------------------------ %
    %            Section 1 - Reduction to the Equivariant Non-Vanishing Problem		       %
    % ------------------------------------------------------------------------------------ %

    \section{Reduction of Theorem A to the Equivariant Non-Vanishing}
    \label{sec:ReductionOfAtoB}
	
	\noindent
    In this section we reduce the proof of Theorem \ref{Intro:Anti-Non-Vanishing} to the equivariant non-vanishing problem Theorem \ref{Intro:Equivariant Non-Vanishing}. The main technical tool we will use is the following structure theorem which was recently obtained in \cite{matsumuraWang_NefAnticanonicalBundle}, building on the previous works \cite{paun_FGNefACBundle}, \cite{paun_FGNefACBundleII}, \cite{zhang_VarietiesNefACBundle}, \cite{lu_SemiStabilityOfAlbanese}, \cite{cao_PhDThesis}, \cite{cao_NefAnticanonicalBundle}, \cite{cao_NefAnticanonicalBundleII}, \cite{cao_NefAnticanonicalBundleIII}, \cite{Ejiri_NefAntiCanonicalDivisorAndMRCFibration}, \cite{CCM_VarietiesNefACBundle} and \cite{wang_manifoldsWithNefAnticanonicalBundle} among others. We refer to \cite{Debarre_HigherDimensionalAlgebraicGeometry} for the definition of \emph{MRC-fibrations} (also known as \emph{rational quotients}). 

    \begin{theorem}\emph{(Matsumura-Wang, \cite{matsumuraWang_NefAnticanonicalBundle})}

        \noindent
        Let $(X, \Delta)$ be a projective klt pair with nef anti-log canonical divisor $-(K_X+\Delta)$. Then there exists a finite quasi-\'etale cover $\pi\colon X'\rightarrow X$ such that $X'$ admis a holomorphic, i.e.\ everywhere defined, MRC-fibration $f\colon X'\rightarrow Y$. Moreover, the following hold true:
        \begin{itemize}
            \item[(1)] Every component of $\Delta' := \pi^*\Delta$ is dominant over $Y$, i.e.\ $\Delta'$ is `\emph{horizontal}'. 
            \item[(2)] The pair $(Y, 0)$ is klt and $K_Y \sim_\mathds{Q} 0$.
            \item[(3)] The general fibre $(F, \Delta_F := \Delta'|_F)$ is a (connected) rationally connected klt pair with nef anti-log canonical divisor $-(K_F+\Delta_F)$.
            \item[(4)] The morphism $f$ is a \emph{locally constant fibration}, i.e.\ there exists a group homomorphism $\rho\colon \pi_1(Y) \rightarrow \Aut(F, \Delta_F)$ and an isomorphism over $Y$
            \begin{align*}
                (X', \Delta') \cong \Big( \widetilde{Y} \times (F, \Delta_F) \Big) / \pi_1(Y),
            \end{align*}
            where $\widetilde{Y}$ denotes the universal cover of $Y$ and where $\pi_1(Y)$ acts diagonally in the natural way on $\widetilde{Y}$ and through $\rho$ on $(F, \Delta_F)$.
        \end{itemize}\label{StructureVarietiesNefACBundle}
    \end{theorem}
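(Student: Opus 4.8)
The result is the structure theorem of Matsumura--Wang, and its proof orchestrates the long list of works cited above rather than proceeding from scratch; what follows is the architecture I would use, taking those inputs as available. The plan begins with the Albanese morphism. Following Cao--H\"oring and its log-theoretic refinements due to Wang and Matsumura--Wang, after replacing $X$ by a finite quasi-\'etale cover one may assume that $\mathrm{alb}_X\colon X\to\Alb(X)$ is an everywhere-defined, surjective \emph{locally constant} fibration over which $\Delta$ is horizontal and whose general fibre again carries a nef anti-log canonical divisor. The engine here is the positivity of direct images of relative (log-)pluricanonical sheaves (Paun, Cao--Paun, Campana--Paun, Hacon--Popa--Schnell), together with the observation that a weakly positive sheaf on an abelian variety whose determinant is numerically trivial is in fact numerically flat, hence arises from a representation of $\pi_1(\Alb(X))$.

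On such a cover $X'$ --- writing $\Delta':=\pi^{*}\Delta$, so that $-(K_{X'}+\Delta')$ is again nef --- I would then run the MRC fibration, which becomes holomorphic, $f\colon X'\to Y$, after one further finite \'etale cover, and determine $Y$. Since $Y$ is not uniruled, Boucksom--Demailly--Paun--Peternell gives that $K_Y$ is pseudo-effective. Conversely, $-(K_{X'/Y}+\Delta')=-(K_{X'}+\Delta')+f^{*}K_Y$ is $f$-nef, so the positivity theory for anti-log canonical bundles (Cao, Cao--H\"oring) makes $f_{*}\mathcal{O}_{X'}\big({-}m(K_{X'/Y}+\Delta')\big)$ weakly positive for $m\gg0$, and reading off its determinant forces $-K_Y$ to be pseudo-effective as well. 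Hence $K_Y\equiv0$, and since abundance holds unconditionally for numerically trivial canonical classes (Nakayama), in fact $K_Y\sim_\Q0$, which is (2); as a by-product, $-(K_{X'/Y}+\Delta')$ is now genuinely nef and $\Q$-linearly equivalent to $-(K_{X'}+\Delta')$.

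With $K_Y\sim_\Q0$ secured, every sheaf $f_{*}\mathcal{O}_{X'}({-}m(K_{X'/Y}+\Delta'))$ is nef with numerically trivial determinant, hence numerically flat, hence --- by Demailly--Peternell--Schneider --- a successive extension of Hermitian-flat bundles, i.e.\ essentially a representation of $\pi_1(Y)$. I would feed this back into the whole relative anti-log canonical algebra $\bigoplus_{m\geq0}f_{*}\mathcal{O}_{X'}({-}m(K_{X'/Y}+\Delta'))$ and combine it with an Arakelov/Viehweg--Zuo-type rigidity argument --- a genuinely varying family of rationally connected fibres would push positivity into $K_Y$, contradicting $K_Y\equiv0$ --- to conclude that $f$ is a locally constant fibration; this produces the homomorphism $\rho\colon\pi_1(Y)\to\Aut(F,\Delta_F)$ and the presentation $(X',\Delta')\cong(\widetilde Y\times(F,\Delta_F))/\pi_1(Y)$ of (4). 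Item (3) then follows at once, the MRC fibre being rationally connected and klt by construction and $-(K_F+\Delta_F)$ nef by restriction; and (1) follows because a vertical component of $\Delta'$, being the $f$-pullback of a divisor on $Y$, is incompatible with the locally constant structure together with $K_Y\equiv0$ and the nefness of $-(K_{X'}+\Delta')$. The decisive obstacle, by a wide margin, is this last paragraph: extracting an honest $\pi_1(Y)$-action from the numerically flat direct images and checking that it trivialises the fibration $f$ itself, rather than merely an associated projective bundle, is the delicate technical core of Matsumura--Wang --- and already of Cao--H\"oring in the absolute case.
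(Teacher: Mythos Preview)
The paper does not contain a proof of this statement: it is quoted as a black box from Matsumura--Wang \cite{matsumuraWang_NefAnticanonicalBundle}, with the surrounding sentence explicitly saying it ``was recently obtained in \cite{matsumuraWang_NefAnticanonicalBundle}, building on the previous works \ldots''. So there is nothing in the paper to compare your proposal against.

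That said, your sketch is a reasonable outline of the strategy actually carried out in the cited literature --- passing to a quasi-\'etale cover to regularise the Albanese/MRC fibration, using positivity of direct images to force $K_Y\equiv 0$, and then upgrading numerical flatness of the direct-image sheaves to a locally constant structure via a $\pi_1(Y)$-representation. You are also right that the genuinely hard step is the last one (extracting the honest fibre bundle structure from the flat direct images), and your caveat that this is ``the delicate technical core of Matsumura--Wang'' is accurate. A couple of minor points: your argument for (1) is a bit hand-wavy --- in the actual development, horizontality of $\Delta'$ is obtained more directly from the locally constant presentation (a vertical component would descend from $Y$ and spoil the product description over the universal cover); and the claim that the MRC fibration ``becomes holomorphic after one further finite \'etale cover'' glosses over real work (this is one of the substantial contributions of the cited papers, not an automatic consequence). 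But as an architectural summary of a result the present paper merely cites, your proposal is fine.
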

    In view of Theorem \ref{StructureVarietiesNefACBundle} we expect $-(K_X+\Delta)$ to have sections if and only if $-(K_F+\Delta_F)$ admits sections which are invariant under the action of $\rho$. This is made precise below:
    \begin{theorem}
		Let $(F, \Delta_F)$ be a rationally connected projective klt pair and assume that the anti-log canonical divisor $-(K_F + \Delta_F)$ is nef. Then the following assertions are equivalent:
		\begin{itemize}
			\item[(1)] For any projective klt pair $(X, \Delta)$ with nef anti-log canonical divisor $-(K_X + \Delta)$ and whose MRC-fibration has $(F, \Delta_F)$ as its general fibre it holds that 
            \begin{align*}
                \kappa\big(X, -(K_X + \Delta)\big)\geq 0.
            \end{align*}
			\item[(2)] There exists a maximal algebraic torus $T\subseteq \Aut(F, \Delta_F)$ such that
                \begin{align*}
                    \kappa\Big(F, -\big(K_F + \Delta_F\big)\Big)^T\geq 0.
                \end{align*}
		\end{itemize}\label{criterionNonVanishing}
	\end{theorem}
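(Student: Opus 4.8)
The plan is to prove Theorem \ref{criterionNonVanishing} by chaining together the structure result Theorem \ref{StructureVarietiesNefACBundle} with the equivariant non-vanishing Theorem \ref{Intro:Equivariant Non-Vanishing}, treating the two implications separately.

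\textbf{Proof of $(2)\Rightarrow(1)$.} Let $(X,\Delta)$ be any projective klt pair with nef $-(K_X+\Delta)$ whose MRC-fibration has general fibre $(F,\Delta_F)$. Since $\kappa$ is insensitive to finite quasi-\'etale covers (by pulling back sections and using that a power of a nonzero section is invariant under the deck group, or averaging), we may replace $X$ by the cover $\pi\colon X'\to X$ of Theorem \ref{StructureVarietiesNefACBundle} and assume $f\colon X\to Y$ is a holomorphic, locally constant MRC-fibration with the four listed properties; in particular $(X,\Delta)\cong(\widetilde Y\times(F,\Delta_F))/\pi_1(Y)$ with $\pi_1(Y)$ acting through a homomorphism $\rho\colon\pi_1(Y)\to\Aut(F,\Delta_F)$, and $K_Y\sim_\Q 0$ with $(Y,0)$ klt. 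Because $-(K_F+\Delta_F)$ is nef and $F$ is rationally connected klt, the results of \cite{cao_NefAnticanonicalBundleIII}, \cite{matsumuraWang_NefAnticanonicalBundle} that form the backbone here give that $-(K_F+\Delta_F)$ is in fact semiample; hence so is the linear series $|-m(K_F+\Delta_F)|$ for suitable $m$. Now the image $\rho(\pi_1(Y))$ lies in $\Aut(F,\Delta_F)$; up to conjugation its connected component of the identity is contained in a maximal algebraic torus $T$, and I would like to reduce the invariance under $\rho(\pi_1(Y))$ to invariance under the commutative linear algebraic group generated by a maximal torus and finitely many semisimple elements — here one must be slightly careful, but the key point is that the relevant group $H$ is commutative and linear algebraic, so Theorem \ref{Intro:Equivariant Non-Vanishing} applies directly once we know $-(K_F+\Delta_F)$ is semiample. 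Applying Theorem \ref{Intro:Equivariant Non-Vanishing} (with $D=\Delta_F$, which is sub-log canonical since the pair is klt) to this $H$ produces, for some $m\geq 1$, a nonzero $H$-invariant section $s\in H^0(F,\mathcal O_F(-m(K_F+\Delta_F)))$. Because $f$ is locally constant and $-m(K_F+\Delta_F)$ pulls back (up to the twist by $f^*K_Y\sim_\Q 0$, which contributes nothing to sections after a further bounded multiple) to a multiple of $-(K_X+\Delta)$, the $H$-invariant, hence $\rho(\pi_1(Y))$-invariant, section $s$ descends along the quotient $(\widetilde Y\times F)/\pi_1(Y)$ to a nonzero section of $\mathcal O_X(-mk(K_X+\Delta))$ for a suitable $k$; this gives $\kappa(X,-(K_X+\Delta))\geq 0$, and pushing down to the original $X$ finishes the implication. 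Assumption (2) supplies exactly the invariant section on $F$ that we need as input; note we only use it for one specific maximal torus, and maximal tori are conjugate, so the choice is immaterial.

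\textbf{Proof of $(1)\Rightarrow(2)$.} Here I would construct an explicit $(X,\Delta)$ realising the worst-case $\rho$. Fix a maximal algebraic torus $T\subseteq\Aut(F,\Delta_F)$. Choose a smooth projective $Y$ with $K_Y\sim 0$ (for instance an abelian variety, or any variety with trivial canonical bundle and infinite fundamental group surjecting onto a lattice in $T$) together with a homomorphism $\rho\colon\pi_1(Y)\to T\subseteq\Aut(F,\Delta_F)$ whose image is Zariski-dense in $T$ — an abelian variety of sufficiently large dimension admits such $\rho$ since $\pi_1$ is a free abelian group of large rank and $T\cong(\C^\times)^r$ has topologically dense finitely generated subgroups. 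Set $(X,\Delta):=(\widetilde Y\times(F,\Delta_F))/\pi_1(Y)$; then $f\colon X\to Y$ is a locally constant fibration with fibre $(F,\Delta_F)$, it is the MRC-fibration of $X$ since $F$ is rationally connected and $Y$ is not uniruled, $(X,\Delta)$ is klt, and $-(K_X+\Delta)$ is nef because it restricts to the nef divisor $-(K_F+\Delta_F)$ on the (locally constant) fibres and is numerically trivial in the $Y$-direction. By hypothesis (1), $\kappa(X,-(K_X+\Delta))\geq 0$, so some $\mathcal O_X(-m(K_X+\Delta))$ has a nonzero section. Restricting such a section to a general fibre $F$ yields a nonzero section of $\mathcal O_F(-m(K_F+\Delta_F))$, and — this is the crucial point — since $f$ is locally constant, a global section of $-m(K_X+\Delta)$ corresponds to a section of $-m(K_F+\Delta_F)$ on $F$ that is invariant under the monodromy $\rho(\pi_1(Y))$, which is Zariski-dense in $T$; invariance under a Zariski-dense subgroup of $T$ is the same as invariance under $T$. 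Hence $\kappa(F,-(K_F+\Delta_F))^T\geq 0$.

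\textbf{Expected main obstacle.} The routine points — descent of sections along quasi-\'etale covers and along locally constant fibrations, and the semiampleness of $-(K_F+\Delta_F)$ on the rationally connected fibre — are standard given the cited results. The genuinely delicate step is matching the group that actually acts (the monodromy group $\rho(\pi_1(Y))$, a possibly non-connected, finitely generated commutative subgroup of $\Aut(F,\Delta_F)$) with the group to which Theorem \ref{Intro:Equivariant Non-Vanishing} or the torus statement (2) is phrased. In the direction $(2)\Rightarrow(1)$ one must ensure that a section invariant under a maximal torus $T$ (or under the commutative linear algebraic group spanned by $T$ and the finitely many extra semisimple generators coming from $\rho$) is genuinely $\rho(\pi_1(Y))$-invariant; this is where the hypothesis that $\Aut(F,\Delta_F)$ contains the monodromy in a controlled commutative linear algebraic subgroup, and the flexibility to enlarge $H$ in Theorem \ref{Intro:Equivariant Non-Vanishing}, is used. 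In the direction $(1)\Rightarrow(2)$ the subtlety is instead the existence of a base $Y$ with $K_Y\sim_\Q0$ and a Zariski-dense representation $\pi_1(Y)\to T$; abelian varieties of large dimension resolve this, but one should check the MRC-fibration of the resulting $X$ is indeed $f$ (equivalently, that $Y$ is not uniruled), which holds since $K_Y$ is trivial.
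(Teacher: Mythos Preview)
Your proposal has genuine gaps in both directions.

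\textbf{In $(2)\Rightarrow(1)$.} You invoke Theorem~\ref{Intro:Equivariant Non-Vanishing} on the fibre, but that theorem requires $-(K_F+\Delta_F)$ to be \emph{semiample}, and this is not known for a general rationally connected klt pair with nef anti-log-canonical divisor; indeed, semiampleness of $-(K_F+\Delta_F)$ is precisely the extra hypothesis singled out in Theorem~\ref{Intro:Anti-Non-Vanishing}. The paper's proof of Theorem~\ref{criterionNonVanishing} does not use Theorem~\ref{Intro:Equivariant Non-Vanishing} at all: it uses only hypothesis~(2) together with Lemma~\ref{InvariantsCommutativeGroups}, which upgrades ``$V^T\neq 0$ for a maximal torus'' to ``$V^H\neq 0$ for every commutative linear algebraic $H$'' via the splitting $H=T\times U$ and the fact that unipotent groups always fix a nonzero vector. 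For this to apply one must first arrange that the Zariski closure $H=\overline{\mathrm{Im}\,\rho}$ is commutative (and linear algebraic). Linearity comes from $\mathrm{Pic}^0(F)=0$, but commutativity is \emph{not} automatic; the paper obtains it by a further reduction, using the decomposition of \cite{grebGuenanicaKebekus_KltCalabiYaus}, to the case where the base $Y$ is an abelian variety, so that $\pi_1(Y)$ itself is abelian. Your assertion that ``its connected component of the identity is contained in a maximal algebraic torus'' is false in general (a connected commutative linear group can have a nontrivial unipotent factor), and in any case you have not justified commutativity.

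\textbf{In $(1)\Rightarrow(2)$.} Your claim that for a locally constant fibration ``a global section of $-m(K_X+\Delta)$ corresponds to a section on $F$ invariant under the monodromy'' is false without further control on $\rho$. Global sections of the flat vector bundle $f_*\mathcal O_X(-m(K_X+\Delta))$ over an abelian variety need not be flat: for instance, if $Y$ is an elliptic curve and $\rho\colon\Z+\Z\tau\to\C^\times$ is $\lambda\mapsto e^{\lambda}$, then $\rho$ has Zariski-dense image but the associated flat line bundle is holomorphically trivial (trivialised by $e^z$), so it has a nonzero section while the space of $\rho$-invariants is zero. The paper avoids this by choosing the image of $\rho$ inside the \emph{compact} real torus $(\mathbb S^1)^r\subset(\C^\times)^r$ (taking $\zeta\in\mathbb S^1$ not a root of unity); the monodromy representation on $H^0(F,\mathcal O_F(-m(K_F+\Delta_F)))$ is then unitary, so every global section of the flat bundle is flat, hence genuinely $\rho$-invariant and, by Zariski density, $T$-invariant.
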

    Recall that an \emph{algebraic torus} is an algebraic group which is isomorphic to $(\C^{\times})^r$ for some integer $r$. In order to prove Theorem \ref{criterionNonVanishing} we need the following elementary result; we include a proof for the sake of completeness:
    \begin{lemma}
        Let $G$ be a linear algebraic group acting algebraically on a finite dimensional vector space $V$. Then the following assertions are equivalent:
        \begin{itemize}
            \item[(1)] For any commutative, algebraic subgroup $H\subseteq G$ it holds that $V^H\neq 0$.
            \item[(2)] For any algebraic torus $T\subseteq G$ it holds that $V^T\neq 0$.
            \item[(3)] For some maximal algebraic torus $T\subseteq G$ it holds that $V^T\neq 0$.
        \end{itemize}\label{InvariantsCommutativeGroups}
    \end{lemma}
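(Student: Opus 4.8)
The plan is to prove the cyclic implications $(1)\Rightarrow(2)\Rightarrow(3)\Rightarrow(1)$. Of these, $(1)\Rightarrow(2)$ is trivial since every algebraic torus is in particular a commutative algebraic subgroup, and $(2)\Rightarrow(3)$ is trivial since a linear algebraic group always contains a maximal torus. The actual content is therefore the implication $(3)\Rightarrow(1)$.

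The first step towards $(3)\Rightarrow(1)$ is to recover the formally stronger statement $(2)$. Since all maximal tori of a linear algebraic group are conjugate, and since conjugation by an element $g\in G$ restricts to a linear isomorphism $V^{H}\to V^{gHg^{-1}}$ for every subgroup $H\subseteq G$, the hypothesis $V^{T_{0}}\neq 0$ for a single maximal torus $T_{0}$ already forces $V^{T}\neq 0$ for \emph{every} maximal torus $T\subseteq G$. As any torus $S\subseteq G$ is contained in some maximal torus $T$, this yields $V^{S}\supseteq V^{T}\neq 0$; in particular $V\neq 0$.

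Next I would deduce $(1)$ from $(2)$. Let $H\subseteq G$ be a commutative subgroup and write $H=H_{s}\times H_{u}$ for its decomposition into the subgroup $H_{u}$ of unipotent elements -- a connected, commutative, hence vector group -- and the subgroup $H_{s}$ of semisimple elements, which is a torus when $H$ is connected. I would induct on $\dim H_{u}$. If $\dim H_{u}>0$, pick a codimension-one (automatically normal) subgroup $U'\subsetneq H_{u}$ and put $H':=H_{s}\times U'$; by the inductive hypothesis $V^{H'}\neq 0$, and since the quotient $H/H'\cong H_{u}/U'$ is a one-dimensional unipotent group it fixes a nonzero vector of the nonzero space $V^{H'}$ by Kolchin's theorem, and such a vector lies in $V^{H}$. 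If $\dim H_{u}=0$, then $H=H_{s}$ is a torus, which embeds into a maximal torus $T$ of $G$, so that $V^{H}\supseteq V^{T}\neq 0$ by the previous step. This closes the induction. (In the applications, the reduction of the general commutative case to the connected one is carried out at the level of the graded section ring, by replacing a section with the product over its finite $H$-orbit.)

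The step I expect to require the most care is the base case $\dim H_{u}=0$ of the induction: a torus acting on a nonzero vector space need not fix any nonzero vector, so $V^{H_{s}}\neq 0$ cannot be obtained by working inside $H_{s}$ and is visible only after enlarging $H_{s}$ to a maximal torus of the ambient group $G$ -- this is precisely the point where hypothesis $(3)$, together with the conjugacy of maximal tori, is used. One should also be mildly careful when $H$ is disconnected, so that $H_{s}$ is only a group of multiplicative type rather than a torus; here I would again reduce to the connected situation. The other essential ingredient, Kolchin's fixed-point theorem for unipotent groups -- applicable since we have already noted $V\neq 0$ -- is what allows the unipotent part of $H$ to be stripped away one dimension at a time.
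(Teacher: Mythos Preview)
Your proof is correct and follows essentially the same route as the paper: decompose a commutative $H$ as (torus)$\times$(unipotent), invoke the hypothesis on the torus factor via conjugacy of maximal tori, and then use Kolchin's theorem to produce a fixed vector for the unipotent factor. The paper simply applies Kolchin once to the full unipotent part acting on $V^{T}\neq 0$ rather than peeling off one dimension at a time, so your induction on $\dim H_{u}$ can be collapsed to a single step.
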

    \begin{proof}
        Clearly $(1)\Rightarrow(2)$. Regarding the converse, let $H\subseteq G$ be any commutative, algebraic subgroup. Then $H$ splits as
        \begin{align*}
            H = T \times U,
        \end{align*}
        where $T$ is an algebraic torus and $U$ is unipotent, cf.\ \cite[Theorem 16.13]{milne_algebraicGroups}. Consequently,
        \begin{align*}
            V^H = V^{(T\times U)} = \left(V^T\right)^U.
        \end{align*}
        Here, we used in the last step that the elements of $U, T$ commute so that the action of $U$ preserves $V^T$. Now, by our assumption $V^T\neq 0$. Since (essentially by definition) any algebraic action of a unipotent group on a non-trivial vector space fixes at least one non-trivial vector we conclude that $V^H\neq 0$.

        Finally, the equivalence of $(2)$ and $(3)$ is clear as any torus $T\subseteq G$ is contained in a maximal one and since any two maximal tori in $G$ are conjugate to each other, see \cite[Theorem 17.10]{milne_algebraicGroups}.
    \end{proof}

	\begin{proof}[Proof (of Theorem \ref{criterionNonVanishing})]
        First, let us prove that $(2)\Rightarrow (1)$. Fix $(X, \Delta)$ a projective klt pair which has $(F, \Delta_F)$ as general fibre of its MRC-fibration. We want to show that  $\kappa(X, -(K_X + \Delta))\geq 0$. Note that to do so we, may replace $(X, \Delta)$ by arbitrary quasi-\'etale covers, c.f.\ \cite[Theorem 5.13]{Ueno_ClassificationOfALgebraicVarieties}.  In particular, by Theorem \ref{StructureVarietiesNefACBundle}, we may assume that the MRC-fibration $f\colon X \rightarrow Y$ is a locally constant fibration with fibre $(F, \Delta_F)$ and such that $K_Y = 0$. Fix a group homomorphism $\rho \colon \pi_1(Y)\rightarrow \Aut(F, \Delta_F)$ such that 
        \begin{align*}
            (X, \Delta) = \Big(\widetilde{Y}\times \big(F, \Delta_F\big)\Big)/\pi_1(Y).
        \end{align*}
             \emph{Claim:} We may assume that the Zariski closure $H := \overline{\mathrm{Im}\rho} \subseteq \Aut(F)$ is a connected, commutative, linear algebraic group.\vspace{0.5\baselineskip}

        Indeed, as $X$ is a projective, the image of $\pi_1(Y) \overset{\rho}{\rightarrow} \Aut(F) \rightarrow \Aut(F)/\Aut^0(F)$ is finite, see for example \cite[Lemma 3.4]{mueller_LocallyConstantFibrations}. Hence, $H$ has only finitely many connected components. Replacing 
        $Y$ by the finite (!) \'etale cover corresponding to 
        \begin{align*}
            \ker\Big(\pi_1(Y)\rightarrow H/H^0\Big) \subseteq \pi_1(Y)
        \end{align*}
        and replacing $X$ by $X':= X\times_Y Y'$, we may assume that $H = H^0$ is connected. Moreover, as $F$ is rationally connected, $\mathrm{Pic}^0(F) = 0$ and so $G:= \Aut^0(F)$ is linear algebraic, see \cite[Corollary 2.18]{brion_AutomorphismGroups}. Thus, $H\subseteq G$ is also linear.
        
        Finally, according to \cite[Theorem B]{grebGuenanicaKebekus_KltCalabiYaus}, there exists a finite quasi-\'etale cover $Y'\rightarrow Y$ such that $Y' = A \times Z$ splits as a product of an abelian variety $A$ and a variety $Z$ with $\mathcal{O}_Z(K_Z) = \mathcal{O}_Z$ and vanishing augmented irregularity. Since $G = \Aut^0(F)$ is linear algebraic, the image of the map $\rho\colon \pi_1(Z) \rightarrow G = \Aut^0(F)$ is finite by \cite[Remark 1.4]{grebGuenanicaKebekus_KltCalabiYaus}. In other words, after another finite \'etale cover we may assume that $X$ splits as
        \begin{align*}
            X \cong \left((\widetilde{A} \times F)/\pi_1(A)\right) \times Z =: X'\times Z.
        \end{align*}
        As $\mathcal{O}_Z(K_Z) = \mathcal{O}_Z$ has sections, it suffices to prove the assertion for $(X', \Delta|_{X'})$ and we are thus reduced to the case that $Y = A$ is an abelian variety. In particular, in this case $\pi_1(Y)$ is an abelian group.

        Let us denote by $H\subseteq G$ the Zariski-closure of $\mathrm{Im}\rho$. Then, by continuity,
        \begin{align*}
            [ H, H ] = \Big[ \hspace{0.1cm} \overline{\mathrm{Im}\rho}, \overline{\mathrm{Im}\rho} \hspace{0.1cm} \Big] \subseteq \overline{ [ \mathrm{Im}\rho, \mathrm{Im}\rho ] } = \{ 1 \},
        \end{align*}
        and so $H\subseteq G$ is also commutative. In summary, $H$ is a connected, commutative, linear algebraic group. This concludes the proof of the \emph{Claim}.

        Let us now continue with the proof of $(2)\Rightarrow(1)$. According to Lemma \ref{InvariantsCommutativeGroups} and the \emph{Claim} we may find an integer $m\geq 1$ such that $-m\left(K_F + \Delta_F \right)$ is Cartier and such that
        \begin{align}
            \bigg( H^0 \Big(F, \mathcal{O}_F \Big( -m \big( K_F + \Delta_F \big) \Big) \Big) \bigg)^H\neq 0.\label{eqs:CritNV}
        \end{align}
        Here, we used that $G = \Aut^0(F)$ is linear algebraic. In what follows, we will prove that the elements of the vector space in (\ref{eqs:CritNV}) lift to non-zero sections in $H^0\left(X, \mathcal{O}_X\left(-m\left(K_X + \Delta\right)\right)\right)$. Indeed, note that
        \begin{align}
		H^0\Big(X, \mathcal{O}_X\big({-}m\left(K_X + \Delta\right)\big)\Big) = H^0\Big(Y, f_*\mathcal{O}_X\big({-}m\left(K_X + \Delta\right)\big)\Big).\label{eqs:CriterionNonVanishing1}
		\end{align}
        Now,
        \begin{align}
            f_*\mathcal{O}_X\big({-}m(K_{X} + \Delta)\big) & = f_*\mathcal{O}_X\big({-}m(K_{X/Y} + \Delta)\big) \nonumber \\
            & = \bigg(\widetilde{Y}\times H^0\Big(F, \mathcal{O}_F\big(-m(K_{F} + \Delta_F)\big)\Big)\bigg)/\pi_1(Y) \label{eqs:CriterionNonVanishing1b}
        \end{align}
        is the holomorphically flat vector bundle with fibre $\mathds{H} := \mathrm{H}^0(F, \mathcal{O}_F(-m(K_{F} + \Delta_F)))$ and monodromy representation $\pi_1(Y)\overset{\rho}{\rightarrow}\Aut(F, \Delta_F) \rightarrow \GL(\mathds{H})$, c.p.\ \cite[Proposition 6.3.(b)]{lazic_NumericalNonVanishing}. In particular,
        \begin{align*}
            H^0\Big(X, \mathcal{O}_X\big({-}m\big(K_X + \Delta\big)\big)\Big) 
            \overset{\textmd{(\ref{eqs:CriterionNonVanishing1})}}{=\joinrel=} \hspace{0.2cm} 
            & H^0\left(Y, f_*\mathcal{O}_X\left({-}m\left(K_{X/Y} + \Delta\right)\right)\right) \\
            \supseteq  \Big( & H^0\big(F, -m\left(K_F + \Delta_F \right) \big) \Big)^\rho \\
            \supseteq  \Big( & H^0\big(F, -m\left(K_F + \Delta_F \right) \big) \Big)^H \neq 0.
        \end{align*}
        Here we used in the last line that $\mathrm{Im}\rho\subseteq H$ and (\ref{eqs:CritNV}). Thus, $(2)\Rightarrow (1)$ is settled.

		Let us now turn to $(1)\Rightarrow (2)$; Given $(F, \Delta_F)$  as in the theorem and $T\subseteq\Aut(F, \Delta_F)$ any maximal algebraic torus, our goal is to produce a projective klt pair $(X, \Delta)$ which admits a locally constant fibration $f\colon X \rightarrow Y$ onto a variety $Y$ with $K_Y = 0$ and with fibre $(F, \Delta_F)$ such that
        \begin{align*}
            H^0\Big(X, \mathcal{O}_X\big({-}m\left(K_X + \Delta\right)\big)\Big) = H^0\Big(F, \mathcal{O}_F\big({-}m\big(K_F + \Delta_F \big) \big)\Big)^T.
        \end{align*}
        To this end, write $T \cong (\C^\times)^r$. Fix an abelian variety $Y$ of dimension $r$ and $\zeta \in \mathds{S}^1 \subseteq \mathds{C}^\times$ any number of absolute value one which is \emph{not} a root of unity. Consider the group homomorphism $\rho\colon \pi_1(Y)\rightarrow T$ determined by the rule
        \begin{align*}
		  \rho\colon \pi_1(Y)
        \rightarrow T, \qquad e_{2i-1} \mapsto (1, \dots, 1, \zeta, 1, \dots, 1), \quad e_{2i} \mapsto (1, \ldots, 1), \quad \forall i=1,\ldots, r,
		\end{align*}
        where $e_1, \ldots e_{2r}$ is some $\mathds{Z}$-basis for $\pi_1(Y)\cong \mathds{Z}^{2r}$.
        Then $\mathrm{Im}\rho$ is Euclidean dense in the compact group $(\mathds{S}^1)^r \subseteq (\C^\times)^r \cong T$. In particular, $\mathrm{Im}\rho$ is Zariski dense in $T$.

        Let us consider the complex analytic variety $X := (\widetilde{Y}\times F)/\pi_1(Y)$ equipped with the $\Q$-Weil divisor $\Delta := (\widetilde{Y}\times \Delta_F)/\pi_1(Y)$. Then $(X, \Delta)$ is a projective klt pair with nef anti-log canonical divisor $-(K_X+\Delta)$, see \cite[Theorem 5.1]{mueller_LocallyConstantFibrations}. By assumption there exists $m\geq 1$ such that
		\begin{align}
		0 \neq H^0\left(X, \mathcal{O}_X\big({-}m\left(K_X + \Delta\right)\big)\right) = H^0\left(Y, f_*\mathcal{O}_X\big({-}m\left(K_X + \Delta\right)\big)\right).\label{eqs:CriterionNonVanishing2}
		\end{align}
		Now, as in (\ref{eqs:CriterionNonVanishing1b}), 
        \begin{align*}
            f_*\mathcal{O}_X(-m(K_{X} + \Delta)) 
            = \bigg(\widetilde{Y}\times H^0\Big(F, \mathcal{O}_F\Big({-}m\big(K_{F} + \Delta_F\big)\Big)\Big)\bigg)/\pi_1(Y)
        \end{align*}
        is the holomorphically flat vector bundle with fibre $\mathds{H} := H^0(F, \mathcal{O}_F(-m(K_{F} + \Delta_F)))$ and monodromy representation $\pi_1(Y)\overset{\rho}{\rightarrow}\Aut(F, \Delta_F) \rightarrow \GL(\mathds{H})$. As $\mathrm{Im}\rho$ is contained in the compact subgroup $(\mathds{S}^1)^r\subsetneq T = (\mathds{C}^\times)^r$, this representation is unitary. It follows that all global sections of $f_*\mathcal{O}_X(-m(K_{X/Y} + \Delta))$ are flat, see for example \cite[Theorem 2.2.(b)]{wang_manifoldsWithNefAnticanonicalBundle}. In other words,
		\begin{align*}
			H^0\Big(Y, f_*\Big(\mathcal{O}_X\big({-}m(K_{X/Y} + \Delta)\big)\Big)\Big) = \mathds{H}^\rho = \mathds{H}^T.
		\end{align*}
		Here, in the last step we used that the action of $\Aut(F)$ on $\mathds{H} = H^0(F, -m(K_F + \Delta_F))$ is algebraic and that $\mathrm{Im}\rho\subsetneq T$ is Zariski dense by construction. As we know that $H^0(Y, f_*(-m(K_{X/Y} + \Delta)))\neq 0$ by (\ref{eqs:CriterionNonVanishing2}) we conclude.
	\end{proof}

    % ------------------------------------------------------------------------------------ %
    %            Section 2 - Torus Invariant Sections on Semiample Line Bunldes		       %
    % ------------------------------------------------------------------------------------ %

 \section{Torus-Invariant Sections on Semiample Line Bundles}
 \label{sec:TorusInvariants}

    In this section we want to prove Theorem \ref{Intro:Equivariant Non-Vanishing} under the additional assumption that $(X, D)$ is log smooth:
    \begin{theorem}
        Let $(X, D)$ be a projective, log smooth, sub-log canonical pair such that $-(K_X+D)$ is semiample. Then for any algebraic torus $T\subseteq \Aut(X, D)$ it holds that
        \begin{align*}
            \kappa(X, -(K_X+D))^T \geq 0.
        \end{align*}\label{Equivariant Non-Vanishing: Semi ample case}
    \end{theorem}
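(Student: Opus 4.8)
The strategy is to translate the global vanishing question into convex geometry governed by the torus action near the fixed locus, in the spirit of moment polytope computations from GIT and symplectic reduction, but adapted to the semiample (rather than ample) setting. First I would reduce to the case where $-(K_X+D)$ is actually \emph{ample}, by the following perturbation trick. Since $-(K_X+D)$ is semiample, it induces a morphism $\varphi\colon X \to Z$ with $-(K_X+D) \sim_\Q \varphi^* A$ for some ample $\Q$-divisor $A$ on $Z$; moreover $\varphi$ is $T$-equivariant for the induced $T$-action on $Z$ (after replacing $T$ by its image, the kernel acting trivially downstairs — but note the kernel of $T \to \Aut(Z)$ still acts on fibres, and invariant sections on $X$ correspond to invariant sections of $A$ on $Z$ under the residual action). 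So it suffices to find a $T$-invariant section of $\CO_Z(mA)$ for some $m$, where $A$ is ample on $Z$; here one should be slightly careful that $Z$ need not be the log canonical model of a pair, so the reduction must be to: $Z$ projective with an ample $\Q$-Cartier divisor $A$ and a torus action such that $A$ carries an equivariant structure. This is arranged because on $X$ the divisor $-m(K_X+D)$ is canonically $T$-linearised (the action on $X$ preserving $D$ lifts canonically to the log-canonical sheaf), and this linearisation descends.

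Next, for the \emph{ample} case, I would use the classical picture: an ample $T$-linearised line bundle $L = \CO_Z(mA)$ on a projective $Z$ with $(\C^\times)^r$-action gives, via the associated Kähler form and moment map, a rational polytope $P \subseteq (\R^r)^*$ (the moment polytope), and the weight decomposition of $H^0(Z, L^{\otimes k})$ has weights lying in $kP \cap M$ (where $M$ is the character lattice), with every lattice point of $kP$ eventually appearing. A $T$-invariant section exists in some $H^0(Z, L^{\otimes k})$ if and only if $0 \in P$. So the whole problem reduces to showing $0 \in P$. Since the paper says it wants to re-prove these statements "in detail" for semiample bundles, in practice I would carry the polytope description through the morphism $\varphi$ directly rather than only on $Z$: the weights of $H^0(X, -mk(K_X+D))$ are governed by a polytope, and I must show it contains the origin.

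To show $0 \in P$, here is where the log canonicity enters, and this is the \textbf{main obstacle}. The point is to compute, at each $T$-fixed point $x \in X$ (or on each fixed component), the local weight of the torus on the fibre of $-m(K_X+D)$, i.e.\ the weight of the $T$-action on $(\det T_x X)^{\otimes m}$ twisted by the divisor $D$; call the resulting character $\mu(x) \in M_\Q$. Standard convexity (the Atiyah–Guillemin–Sternberg / Brion description) says $P$ is the convex hull of the $\mu(x)$ over fixed points $x$. So I must produce fixed points whose weights $\mu(x)$ surround the origin — equivalently, for every $1$-parameter subgroup $\lambda\colon \C^\times \to T$, I must find a fixed point $x$ with $\langle \mu(x), \lambda\rangle \le 0$. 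Given $\lambda$, run the $\C^\times$-action; by properness the limit $x_0 = \lim_{t\to 0}\lambda(t)\cdot x$ exists and is $\lambda$-fixed; at $x_0$ the tangent space decomposes into weight spaces with weights $a_1,\dots,a_n$ (for the generic $x$ flowing in, these are $\ge 0$ on the directions transverse to the fixed component and the fixed directions have weight $0$), so the weight on $\det T_{x_0}X$ is $\sum a_i \ge 0$, hence the weight on the anti-canonical $-K_X$ at $x_0$ is $-\sum a_i \le 0$. The divisor $D$ contributes $+\sum a_j (\text{coefficient} \le 1)\cdot(\text{weight of the conormal of } D_j)$; the $\le 1$ coefficient bound together with SNC is exactly what is needed to control this error term and keep $\langle \mu(x_0),\lambda\rangle \le 0$ — this is the precise point where the hypothesis $a_i \le 1$ is used, and where it is sharp (cf.\ the promised Example \ref{ex:LogCanonicityIsOptimal}). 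Making this local weight bookkeeping rigorous at SNC crossings, and checking that the "limit point" argument interacts correctly with the semiample morphism $\varphi$ (so that one really concludes $0 \in P$ and not merely something about a face), will be the technical heart; the rest is assembling the polytope formalism and the perturbation from semiample to ample.
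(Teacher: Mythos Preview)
Your overall strategy—reduce to showing $0 \in P_\mu$ via the local weight computation at fixed points, with the sub-lc bound $\delta_i \le 1$ as the decisive input—is correct and matches the paper. But there is a sign slip in the core step: $\mathcal{O}_X(-K_X) = \det T_X$, so the weight of $-K_X$ at $x_0$ is $+\sum a_i$, not $-\sum a_i$. At the source $x_0 = \lim_{t\to 0}\lambda(t)\cdot x$ (where all tangent weights $a_i \ge 0$) the weight of $-(K_X+D)$ therefore computes to $\sum_i (1-\delta_i)\,a_i \ge 0$, the wrong sign for your purpose. The fix is simply to flow to the \emph{sink} $x_\infty = \lim_{t\to\infty}\lambda(t)\cdot x$, where all $a_i \le 0$. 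A second small gap: the flow limit is a priori only $\lambda$-fixed, not $T$-fixed, whereas $P_\mu$ is built from $T$-fixed points; take $\lambda$ generic so that $X^\lambda = X^T$, and then pass to all $\lambda$ by continuity of $\lambda\mapsto\min_y\langle\mu(y),\lambda\rangle$.

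Your proposed reduction to the semiample image $Z$ is not what the paper does and is, as you suspect, awkward ($Z$ need not be smooth and no log structure descends). The paper instead first treats the \emph{ample} case (Lemma~\ref{EquivariantNonVanishing Ample Case}) by a different mechanism: at each $T$-fixed point it invokes the local-cone description of the moment polytope—that $\mu + \varepsilon\nu_i \in P_\mu$ for small $\varepsilon\ge 0$, citing Buczy\'nski--Wi\'sniewski—combines this with the identity $\mu = -\sum(1-\delta_i)\nu_i$ to obtain $(1-\varepsilon)\mu \in P_\mu$ at every vertex, and then applies a one-line convexity lemma (Proposition~\ref{Convex Geometry}) to force $0 \in P_\mu$. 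The semiample case (Corollary~\ref{Equivariant NonVanishing: Semiample case almost}) is then handled by perturbing with a small $T$-invariant ample $\varepsilon A$ and taking the Hausdorff limit $\varepsilon \to 0$. In fact your corrected Bia\l ynicki-Birula/Hilbert--Mumford argument is arguably cleaner here: it yields $0 \in P_\mu(X, -(K_X+D))$ with no positivity hypothesis at all, so both the local-cone input and the perturbation step become unnecessary; semiampleness would enter only at the very end, to identify $P_\Gamma$ with $P_\mu$ (Proposition~\ref{PropertiesSectionMomentPolytope} and Corollary~\ref{PropertiesSectionMomentPolytope Part 2}).
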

    To prove Theorem \ref{Equivariant Non-Vanishing: Semi ample case} we will employ some discrete methods from toric geometry. Let us start by introducing the following concept:
    \begin{definition}
        Let $X$ be a normal variety and let $G\subseteq \Aut(X)$ be a subgroup. A coherent sheaf $\mathscr{L}$ on $X$ is said to be \emph{$G$-invariant} if $g^*\mathscr{L}\cong\mathscr{L}$ for all $g\in G$. In this case we say that $\mathscr{L}$ is \emph{$G$-linearisable} if the action of $G$ on $X$ may be lifted to an action of $G$ on the total space of $\mathscr{L}$ via sheaf automorphisms. A \emph{$G$-linearisation} of $\mathscr{L}$ is a choice of such a lift. See \cite[Definition 1.6]{Mumford_GIT} for a more formal definition of linearisations.
    \end{definition}
    \begin{example}
        Let $X$ be a normal variety, let $G\subseteq \Aut(X)$ be a subgroup and let $\mathscr{L}, \mathscr{L}_1, \mathscr{L}_2$ be $G$-linearised coherent sheaves on $X$. Then also $\mathscr{L}^{*}$ and $\mathscr{L}_1\otimes \mathscr{L}_2$ inherit natural $G$-linearisations.
    \end{example}
    \begin{definition}
        Let $X$ be a normal variety, let $G\subseteq \Aut(X)$ be a subgroup and let $\mathscr{L}$ be a $G$-linearised, coherent, reflexive $\mathcal{O}_X$-module of rank one. Then $G$ acts on $H^0(X, (\mathscr{L}^{\otimes m})^{**})$ for any integer $m$ and we will say that that $\kappa(X, \mathscr{L})^G \geq 0$ if and only if
        \begin{align*}
             \bigoplus_{m=1}^\infty H^0\Big(X, \left(\mathscr{L}^{\otimes m}\right)^{**}\Big)^G \neq 0.
        \end{align*}\label{def:EquivariantKodairaDimension}
    \end{definition}
    \begin{example}
        Consider $X= \CP^1$ equipped with the line bundle $\mathscr{L} = \mathcal{O}_{\CP^1}(-1)$.
        \begin{itemize}
            \item[(1)] Let $G = \Aut(\CP^1) = \textmd{PGL}_2(\mathds{C})$. Then $\mathscr{L}$ is clearly $G$-invariant. However, this action is (rather famously) \emph{not} linearisable, see for example \cite[Example 4.2.4]{brion_LinearisationsOfGroupActions}.
            \item[(2)] Let $G = T \subsetneq \Aut(\CP^1)$ be the subgroup of (equivalence classes of) diagonal matrices. Then $ \C^\times \cong T \subsetneq \Aut(\CP^1)$ is a maximal torus acting via $t\bigcdot [x:y] := [tx:y]$ and this action \emph{is} linearisable. In fact, for any $w\in \mathds{Z}$ an explicit choice of lifting is provided by $t\bigcdot (x, y) = (t^{w+1}x, t^w y)$. Here we identify, as per usual, $ \mathcal{O}_{\CP^1}(-1)|_{[x:y]} = \mathds{C}\cdot (x, y) \subseteq \mathds{C}^2$.
        \end{itemize}\label{ex:LinearisationsOnP1}
    \end{example}

    \begin{example}
        Let $X$ be a normal variety and let $G\subseteq \Aut(X)$ be a subgroup. Then the anti-canonical bundle $\mathcal{O}_X(-K_X)$ admits a natural linearisation given by push-forward of forms: $g\bigcdot \tau := g_*(\tau)$. Recall that in local coordinates $\varphi_*$ is simply defined by the formula
            \begin{align*}
                \left(\varphi_*\left(f \frac{\partial}{\partial z_1}\wedge\ldots\wedge\frac{\partial}{\partial z_n}\right)\right)(\varphi(x)) 
                = f(x) \cdot d\varphi\left(\frac{\partial}{\partial z_1}\right)\wedge\ldots\wedge d\varphi \left(\frac{\partial}{\partial z_n}\right),
            \end{align*}
            where $\varphi\colon X\rightarrow X$ is any automorphism.
            \label{ex:LinearisationsOnTheAC}
    \end{example}
    \begin{example}
        Let $D$ be a Cartier divisor on $X$ and assume that it is invariant (as a subset of $X$) under the action of $G$. Then also $\mathcal{O}_X(D)$ carries a natural $G$-linearisation as a subsheaf of $\C(X)$, which is induced by the pull-back of functions: $g\bigcdot f := (g^{-1})^*f$. Note that we take the inverse in order to obtain a left-action.

        Similarly, $\Omega^1_X$ admits a natural linearisation given by $g\bigcdot df = d(g\bigcdot f) = d((g^{-1})^*f)$.
        \label{ex:LinearisationsOnSheavesOfDivisors}
    \end{example}
    In conclusion, whenever $(X, D)$ is a pair, then the line bundle $\mathcal{O}_X(-m(K_X + D))$ carries a natural $\Aut(X, D)$-linearisation for all $m\in \Z$ such that $m(K_X+D)$ is Cartier.\\
    
    In the following we will require some standard facts about algebraic tori and their representations which we recall below: Let $T \cong (\C^\times)^r$ be an algebraic torus. Then its \emph{character lattice} $M := \Hom(T, \C^\times)$ is a finitely generated free abelian group of rank $\textmd{rk} M = \dim T$. Explicitly, if $T = (\C^\times)^r$ then 
        \begin{align*}
            \Z^r \overset{\sim}{\longrightarrow} M= \Hom(T, \C^\times), \quad w = (a_1, \ldots, a_r) \mapsto \Big( t \mapsto t^{w} := t_1^{a_1} \cdot \ldots \cdot t_r^{a_r} \Big).
        \end{align*}
        In the sequel, following standard practice, we will consider $M$ as an additive group $(M, +)$. In particular, the neutral element $0\in M$ corresponds to the trivial homomorphism $t\mapsto 1$. We will also consider the real vector space $M_{\R} := M \otimes_{\Z} \R$.

        Now, let $\rho\colon T\rightarrow \GL(V)$ be an algebraic representation of $T$ on a finite-dimensional complex vector space $V$ (we also say that $V$ is a \emph{$T$-module}). Then the action of $T$ can be \emph{diagonalised}, i.e.\ there exists a $\C$-basis $e_1, \ldots, e_m$, called a basis of \emph{eigenvectors}, for $V$ such that 
        $$
            t \bigcdot e_i = t^{w_i} \cdot e_i, \quad \forall t\in T
        $$
        for some $w_1, \ldots, w_m \in M= \Hom(T, \C^\times)$ called the \emph{weights} of the representation $\rho$. A representation of $T$ is determined up to isomorphism by its weights. 
    
    \begin{definition}
        Let $X$ be a normal projective variety, let $T\subseteq \Aut(X)$ be an algebraic torus and let $\mathscr{L}$ be a coherent sheaf on $X$, linearised for the action of $T$. Then $T$ acts naturally on the space $H^0(X, \mathscr{L})$. We let $W_{\Gamma}(X, \mathscr{L}) \subseteq M$ denote the set of  weights of this action. Moreover, we denote the convex hull of $W_{\Gamma}(X, \mathscr{L})$ in $M_{\R}$ by $P_{\Gamma}(X, \mathscr{L})$ and call this the \emph{section polytope} of $\mathscr{L}$.
    \end{definition}

    \begin{notation}
        From now on and for the rest of this section we let $X$ denote a smooth projective variety and we fix an algebraic torus $T\subseteq \Aut(X)$. Then the set of \emph{fixed points} $X^T := \{ x\in X | \hspace{0.1cm} t\bigcdot x = x, \forall t\in T \}$ is a non-empty, closed, \emph{smooth} subvariety of $X$, see \cite[Theorem 13.1, Proposition 13.20]{milne_algebraicGroups}. We let $X^T = Y_1 \sqcup \ldots \sqcup Y_c$ denote the decomposition of $X^T$ into its connected components. Moreover, for any $i = 1,\ldots , c$ let us fix  a point $y_i \in Y_i$.
    \end{notation}
    \begin{definition}
        Let $\mathscr{L}$ be a line bundle on $X$, linearised for the action of $T$. Then $T$ acts on the one-dimensional $\C$-vector space $\mathscr{L}|_{y_i}$ for any $i = 1,\ldots , c$, with weight $\mu_i$ say. Let us set $W_{\mu}(X, \mathscr{L}) := \{ \mu_1, \ldots, \mu_c \} \subseteq M$. The convex hull $P_{\mu}(X, \mathscr{L})$ of $W_{\mu}(X, \mathscr{L})$ in  $M_{\mathds{R}}$ is called the \emph{moment polytope}.
    \end{definition}
    \begin{remark}
        Note that $\mu_i$ does not depend on the choice of $y_i \in Y_i$. Indeed, let $y_i\in U \subseteq X$ be a $T$-invariant affine open neighbourhood which exists by \cite[Corollary 5.3.6]{brion_LinearisationsOfGroupActions}. Pick a section $\sigma \in \mathscr{L}(U)$ which does not vanish at $y_i$. Then the weight $\mu_y$ of the action of $T$ on $\mathscr{L}|_{y}$ is given by $t^{\mu_y} =  \frac{(t\bigcdot\sigma)(y)}{\sigma(y)}$ for all $y\in Y_i\cap U$ such that $\sigma(y)\neq 0$. This shows that the map $Y_i \rightarrow M$,  $y\mapsto \mu_y$ is continuous, hence constant.
    \end{remark}
    \begin{example}
        Continuing  Example \ref{ex:LinearisationsOnP1}, let $\C^{\times}\subseteq \Aut(\CP^1)$ act on $\mathcal{O}_{\mathds{P}^1}(-1)$ via the rule $t\bigcdot (x,y) = (t^{w+1}x, t^wy)$. Then $W_{\mu}(\CP^1, \mathcal{O}(-1)) = \{ w, w+1\} \subseteq M = \Z$ and, consequently, $P_{\mu}(\CP^1, \mathcal{O}_{\mathds{P}^1}(-1)) = [  w+1, w ] \subseteq M_{\R} = \R$.\label{ex:LinearisationsOnP1_2}
    \end{example}
    The following result is essentially taken from \cite[Lemma 2.4]{wisniewski_AlgebraicTorusActionsOnContactManifolds} where it is stated only for ample bundles. The proof generalises to the semiample setting without difficulties.
    \begin{proposition}
        Let $X$ be a smooth projective variety, let $T\subseteq \Aut(X)$ be an algebraic torus and let $\mathscr{L}$ be a semiample line bundle on $X$, linearised for the action of $T$.
        
        Then for any integer $m\geq 1$ the following hold true:
        \begin{itemize}
            \item[(1)] $mP_{\Gamma}(X, \mathscr{L}) \subseteq P_{\Gamma}(X, \mathscr{L}^{\otimes m})$ and $mP_{\mu}(X, \mathscr{L}) = P_{\mu}(X, \mathscr{L}^{\otimes m})$ as subsets of $M_{\mathds{R}}$,
            \item[(2)] $P_{\Gamma}(X, \mathscr{L}) \subseteq P_{\mu}(X, \mathscr{L})$ and
            \item[(3)] if $\mathscr{L}$ is basepoint free then $P_{\Gamma}(X, \mathscr{L}) = P_{\mu}(X, \mathscr{L})$.
        \end{itemize}\label{PropertiesSectionMomentPolytope}
    \end{proposition}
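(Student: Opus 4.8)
\emph{Strategy.} I would prove the three parts separately, in the order (1), (2), (3): part (3) needs only (2) together with one extra observation, while (1) and (2) are independent. Parts (1) and (3) are formal manipulations with the $T$-linearisations, and all of the geometric content sits in part (2), which I would deduce from a limit argument along generic one-parameter subgroups, in the spirit of Bia{\l}ynicki--Birula theory.

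\emph{Proof of (1).} For the section polytope the point is that the multiplication map $H^0(X,\mathscr{L})^{\otimes m}\to H^0(X,\mathscr{L}^{\otimes m})$ is $T$-equivariant for the tensor-product linearisation. Hence a product of eigensections of weights $w_1,\dots,w_m\in W_\Gamma(X,\mathscr{L})$ is an eigensection of weight $w_1+\cdots+w_m$, and it is non-zero, being a product of non-zero sections of line bundles on the irreducible variety $X$. Therefore the $m$-fold Minkowski sumset of $W_\Gamma(X,\mathscr{L})$ is contained in $W_\Gamma(X,\mathscr{L}^{\otimes m})$, and passing to convex hulls, using $m\,\mathrm{conv}(A)=\mathrm{conv}(A)+\cdots+\mathrm{conv}(A)=\mathrm{conv}(A+\cdots+A)$, yields $mP_\Gamma(X,\mathscr{L})\subseteq P_\Gamma(X,\mathscr{L}^{\otimes m})$. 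For the moment polytope I would simply note that $(\mathscr{L}^{\otimes m})|_{y_i}=(\mathscr{L}|_{y_i})^{\otimes m}$ carries the $m$-th power of the character by which $T$ acts on $\mathscr{L}|_{y_i}$, so $W_\mu(X,\mathscr{L}^{\otimes m})=m\,W_\mu(X,\mathscr{L})$ and the equality $P_\mu(X,\mathscr{L}^{\otimes m})=m\,P_\mu(X,\mathscr{L})$ follows at once.

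\emph{Proof of (2).} Let $N:=\Hom(\C^\times,T)$ denote the cocharacter lattice, with $\langle\,\cdot\,,\,\cdot\,\rangle\colon M\times N\to\Z$ the natural pairing. The plan is to show, for each weight $w\in W_\Gamma(X,\mathscr{L})$ and each cocharacter $\lambda\in N$ avoiding the finitely many ``walls'' on which $X^\lambda$ is strictly larger than $X^T$, the inequality $\langle w,\lambda\rangle\le\max_i\langle\mu_i,\lambda\rangle$. Both sides being continuous and positively homogeneous of degree one in $\lambda$, and the rays $\R_{\ge 0}\lambda$ for these $\lambda$ being dense in $N_\R$, the inequality then holds for all $\lambda\in N_\R$; this is precisely the condition $w\in P_\mu(X,\mathscr{L})$, so $W_\Gamma(X,\mathscr{L})\subseteq P_\mu(X,\mathscr{L})$ and, taking convex hulls, $P_\Gamma(X,\mathscr{L})\subseteq P_\mu(X,\mathscr{L})$. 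To establish the inequality, fix $s\in H^0(X,\mathscr{L})$ an eigensection of weight $w$ and a point $x_0$ with $s(x_0)\neq 0$. By projectivity of $X$ the orbit map $\tau\mapsto\lambda(\tau)\bigcdot x_0$ extends to a $\C^\times$-equivariant morphism $\gamma\colon\mathds{A}^1\to X$, where $\C^\times$ acts on the source by scaling, and $y:=\gamma(0)$ then lies in $X^\lambda=X^T$, say $y\in Y_i$. Now $\gamma^*\mathscr{L}$ is a $\C^\times$-linearised line bundle on $\mathds{A}^1$; any such bundle is equivariantly trivial, of the form $\mathds{A}^1\times\C$ with action $\sigma\bigcdot(\tau,v)=(\sigma\tau,\sigma^a v)$ for the integer $a$ equal to the weight of the fibre over $0$, so that $a=\langle\mu_i,\lambda\rangle$. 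In this trivialisation the regular eigensections are exactly the monomials $\tau\mapsto(\tau,c\tau^k)$ with $k\ge 0$, of weight $a-k$; since $\gamma^*s$ is a regular eigensection of weight $\langle w,\lambda\rangle$ which is non-zero at $\tau=1$ (where it equals $s(x_0)$), this forces $\langle w,\lambda\rangle=a-k\le a=\langle\mu_i,\lambda\rangle$, as required. I expect this to be the main obstacle: the limit $\gamma(0)$ is a priori only $\lambda$-fixed, whereas the moment polytope is built from the $T$-fixed components, so one genuinely needs the genericity of $\lambda$ (to force $X^\lambda=X^T$) together with the density-and-homogeneity argument that transports the inequality back to all of $N_\R$.

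\emph{Proof of (3).} Only the inclusion $P_\mu(X,\mathscr{L})\subseteq P_\Gamma(X,\mathscr{L})$ remains. Fix $i$ and, using that $\mathscr{L}$ is basepoint free, choose $s\in H^0(X,\mathscr{L})$ with $s(y_i)\neq 0$. Decompose $s=\sum_w s_w$ into $T$-eigencomponents; evaluating at the fixed point $y_i$, on whose fibre $\mathscr{L}|_{y_i}$ the torus acts with weight $\mu_i$, forces $s_w(y_i)=0$ for every $w\neq\mu_i$. Hence $s_{\mu_i}(y_i)=s(y_i)\neq 0$, so $s_{\mu_i}$ is a non-zero eigensection of weight $\mu_i$, and $\mu_i\in W_\Gamma(X,\mathscr{L})\subseteq P_\Gamma(X,\mathscr{L})$. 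As this holds for every $i$ we get $W_\mu(X,\mathscr{L})\subseteq P_\Gamma(X,\mathscr{L})$, and combining with (2) gives $P_\Gamma(X,\mathscr{L})=P_\mu(X,\mathscr{L})$.
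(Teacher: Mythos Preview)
Your proof is correct and, for parts (1) and (3), proceeds exactly as the paper does; your eigencomponent decomposition in (3) is just a rewording of the paper's splitting of the short exact sequence $0\to H^0(X,\mathscr{L}\otimes\mathfrak{m}_{y_i})\to H^0(X,\mathscr{L})\to\mathscr{L}|_{y_i}\to 0$ of $T$-modules. The only substantive difference is in (2): the paper does not argue this at all but simply invokes \cite[Lemma~2.4(2)]{wisniewski_AlgebraicTorusActionsOnContactManifolds} verbatim, whereas you supply a complete Bia{\l}ynicki--Birula style limit argument via generic one-parameter subgroups. Your argument is correct (the key points---existence of the limit by properness, the genericity condition $X^\lambda=X^T$ holding off finitely many walls in $N$, and the passage from generic $\lambda$ to all of $N_\R$ by density and homogeneity---are all sound), and it is in the same spirit as the cited reference, so the two approaches should be regarded as essentially identical.
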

    \begin{proof}
        $(1)$: Clearly $mW_{\Gamma}(X, \mathscr{L}) \subseteq W_{\Gamma}(X, \mathscr{L}^{\otimes m})$ for if $w_1, \ldots, w_m \in W_{\Gamma}(X, \mathscr{L})$ are weights for the action of $T$ on $H^0(X, \mathscr{L})$ with corresponding eigenvectors $\sigma_1, \ldots, \sigma_m$ then 
        \begin{align*}
            \sigma_1\otimes \ldots\otimes\sigma_m\in H^0\Big(X, \mathscr{L}^{\otimes m}\Big)
        \end{align*}
        is an eigenvector for the action of $T$ of weight $w_1 + \ldots + w_m \in mW_{\Gamma}(X, \mathscr{L})$. We deduce that $mP_{\Gamma}(X, \mathscr{L}) \subseteq P_{\Gamma}(X, \mathscr{L}^{\otimes m})$.

        That $mP_{\mu}(X, \mathscr{L}) = P_{\mu}(X, \mathscr{L}^{\otimes m})$ is obvious for if $y\in X^T$ and if $\mu$ is the weight for the action of $T$ on $\mathscr{L}|_y$ then the weight of the action on $\mathscr{L}^{\otimes m}|_y$ is just $m\mu$.

        To prove $(2)$, the argument in \cite[Lemma 2.4.(2)]{wisniewski_AlgebraicTorusActionsOnContactManifolds} applies ad verbatim. Note that to prove Theorem \ref{Equivariant Non-Vanishing: Semi ample case} we will not make use of this inclusion.

        Regarding $(3)$, the proof of  \cite[Lemma 2.4.(3)]{wisniewski_AlgebraicTorusActionsOnContactManifolds} again goes through without changes. However, as the argument is so elementary we want to quickly repeat it here: Fix $y\in X^T$ and let us denote by $\mu$ the weight of the action of $T$ on $\mathscr{L}|_y$. As $\mathscr{L}$ is generated by global sections we have the short exact sequence of $T$-modules
        \begin{align*}
            0 \rightarrow H^0\big(X, \mathscr{L}\otimes \mathfrak{m}_y\big) \rightarrow H^0\big(X, \mathscr{L}\big) \rightarrow \mathscr{L}|_y \rightarrow 0.
        \end{align*}
        As it is well-known that any short exact sequence of $T$-modules splits, we find an eigenvector $\sigma \in \mathrm{H}^0(X, \mathscr{L})$ for the action of $T$ of weight $\mu$. This proves that $W_{\mu}(X, \mathscr{L}) \subseteq W_{\Gamma}(X, \mathscr{L})$. We infer that $P_{\mu}(X, \mathscr{L}) \subseteq P_{\Gamma}(X, \mathscr{L})$ and so the result follows from $(2)$.
    \end{proof}

    \begin{corollary}
        Let $X$ be a smooth projective variety, let $T\subseteq \Aut(X)$ be an algebraic torus and let $\mathscr{L}$ be a semiample line bundle on $X$, linearised for the action of $T$.

        Then $\kappa(X, \mathscr{L})^T \geq 0$ if and only if the origin $0\in M$ is contained in $P_{\mu}(X, \mathscr{L})$.\label{PropertiesSectionMomentPolytope Part 2}
    \end{corollary}
    \begin{proof}
        Let us start by proving that 
        \begin{align}
            \kappa(X, \mathscr{L})^T \geq 0 
            \quad \Leftrightarrow \quad \exists m: \hspace{0.25cm} 0 \in W_{\Gamma}\big(X, \mathscr{L}^{\otimes m}\big)
            \quad \Leftrightarrow \quad \exists m: \hspace{0.25cm} 0 \in P_{\Gamma}\big(X, \mathscr{L}^{\otimes m}\big).\label{eqs:PropertiesSectionMomentPolytope}
        \end{align}
        Indeed, the first equivalence holds true by the very definition of $W_{\Gamma}$ and also the second `$\Rightarrow$'-implication is obvious. Regrading the converse, assume that $0\in P_{\Gamma}(X, \mathscr{L}^{\otimes m})$. Let us denote the weights of the action of $T$ on $\mathrm{H}^0(X, \mathscr{L}^{\otimes m})$ by $w_1, \ldots, w_\ell$ and let $\sigma_1, \ldots, \sigma_\ell$ be a corresponding basis of eigenvectors. Now $0\in P_{\Gamma}(X, \mathscr{L}^{\otimes m})$ simply means that there exist real numbers $0\leq \lambda_i \leq 1$ such that $\sum \lambda_i = 1$ and 
        \begin{align*}
            \sum_{i=1}^\ell \lambda_i w_i = 0.
        \end{align*}
        In fact, by Proposition \ref{realAndRationalConvexHull} below one can assume the $\lambda_i$’s to be rational numbers, say $\lambda_i = \frac{p_i}{q_i}$. Set $k_i := q_1 \cdot \ldots \cdot q_{i-1} \cdot p_i \cdot q_{i+1} \cdot \ldots \cdot q_\ell \in \Z$. Then
       \begin{align*}
            \sum_{i=1}^\ell k_i w_i = 0.
        \end{align*}
        But this in turn just means that 
        \begin{align*}
            \sigma_1^{\otimes k_1} \otimes \ldots \otimes \sigma_\ell^{\otimes k_\ell} \in H^0\Big(X, \mathscr{L}^{\otimes m(k_1+\ldots + k_\ell)}\Big)
        \end{align*}
        is an eigenvector for the action of $T$ of weight $\sum k_iw_i = 0$. In other words, denoting $k = k_1 +\ldots + k_\ell$ we deduce that
       \begin{align*}
            0 \in kW_{\Gamma}\Big(X, \mathscr{L}^{\otimes m}\Big) 
	       \subseteq kP_{\Gamma}\Big(X, \mathscr{L}^{\otimes m}\Big) 
	       \subseteq P_{\Gamma}\Big(X, \mathscr{L}^{\otimes km}\Big),
        \end{align*}
        where we applied Proposition \ref{PropertiesSectionMomentPolytope}. This concludes the proof of (\ref{eqs:PropertiesSectionMomentPolytope}). 
        
        But now, a further application of Proposition \ref{PropertiesSectionMomentPolytope} yields
        \begin{align*}
            P_{\Gamma}\Big(X, \mathscr{L}^{\otimes m}\Big) = P_{\mu}\Big(X, \mathscr{L}^{\otimes m}\Big) = mP_{\mu}\big(X, \mathscr{L}\big)
        \end{align*}
        for sufficiently large $m\geq 1$. In view of (\ref{eqs:PropertiesSectionMomentPolytope}) we deduce that $\kappa(X, \mathscr{L})^T \geq 0$ if and only if $0 \in P_{\mu}(X, \mathscr{L})$ as proclaimed.
    \end{proof}
    The following result is just basic linear algebra. Due to the inability of the author to locate a reference in the literature we include a proof anyway.
    \begin{proposition}
        Let $M$ be a finitely generated free abelian group, let $w_1, \ldots, w_m\in M_{\Q} := M\otimes \Q$ be a finite subset and let $P\subseteq  M_{\R}$ be its convex hull. Then
        \begin{align*}
            P \cap M_{\Q} = P_{\Q} := \left\{ \sum_{i=1}^m \lambda_i w_i \hspace{0.1cm} \Big| \hspace{0.1cm} 0\leq \lambda_i, \hspace{0.1cm} \lambda_i \in \Q, \hspace{0.1cm} \sum_{i=1}^m \lambda_i = 1  \right\}.
        \end{align*}\label{realAndRationalConvexHull}
    \end{proposition}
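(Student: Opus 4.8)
The plan is to prove the two inclusions separately. The inclusion $P_{\Q}\subseteq P\cap M_{\Q}$ is essentially definitional, so I would dispose of it in one line: an element $\sum_{i}\lambda_i w_i$ with $\lambda_i\in\Q_{\geq 0}$ and $\sum_i\lambda_i=1$ is a convex combination of the $w_i$, hence lies in the convex hull $P$; and since each $w_i\in M_{\Q}$ and the $\lambda_i$ are rational, the combination lies in $M_{\Q}$.

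For the reverse inclusion $P\cap M_{\Q}\subseteq P_{\Q}$ I would fix $x\in P\cap M_{\Q}$ and aim to produce a representation of $x$ as a convex combination of the $w_i$ with \emph{rational} coefficients. First I would invoke the affine form of Carath\'eodory's theorem to reduce to an affinely independent family: there is a subset $\{w_{i_0},\dots,w_{i_k}\}\subseteq\{w_1,\dots,w_m\}$, affinely independent, whose convex hull contains $x$. Affine independence guarantees that the barycentric coordinates of $x$ with respect to $w_{i_0},\dots,w_{i_k}$ --- that is, the solution $(\mu_0,\dots,\mu_k)$ of the system $\sum_{j}\mu_j=1$, $\sum_{j}\mu_j w_{i_j}=x$ --- are \emph{unique}, and since $x$ lies in the convex hull of the $w_{i_j}$ these coordinates are automatically nonnegative. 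Expressed in coordinates with respect to a $\Z$-basis of $M$, this is a linear system with rational coefficients admitting a unique solution; hence, by Cramer's rule (or Gaussian elimination), that solution is rational. Padding with zeros, i.e.\ setting $\lambda_i:=\mu_j$ when $i=i_j$ for some $j$ and $\lambda_i:=0$ otherwise, recovers $x=\sum_i\lambda_i w_i$ with $\lambda_i\in\Q_{\geq 0}$ and $\sum_i\lambda_i=1$, so that $x\in P_{\Q}$.

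The only point requiring a modicum of care --- and the only thing one might reasonably call the ``hard part'' --- is the passage from the existence of real convex coefficients to the existence of rational ones; Carath\'eodory's theorem is precisely the device that makes the relevant linear system have a \emph{unique} (hence rational) solution. If a Carath\'eodory-free argument were preferred, I would instead observe that the fibre $\{\lambda\in\R^m_{\geq 0}\colon \sum_i\lambda_i=1,\ \sum_i\lambda_i w_i=x\}$ is a non-empty polytope cut out by affine (in)equalities with rational coefficients, hence possesses a vertex, and a vertex of a rational polytope is the unique solution of a rational linear subsystem and therefore rational; this produces the desired rational $\lambda$ directly.
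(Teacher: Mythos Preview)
Your proof is correct, but it takes a different route from the paper's. Both arguments dispose of the easy inclusion in one line and then focus on producing rational convex coefficients for a given $x\in P\cap M_{\Q}$. The paper proceeds by a density argument: after translating so that $x=0$ and discarding any $w_i$ whose coefficient in a chosen real convex combination is zero, it observes that the affine subspace $\{\lambda:\sum_i\lambda_i w_i=0,\ \sum_i\lambda_i=1\}$ is defined over~$\Q$, hence rational points are dense in its real points; choosing a rational solution sufficiently close to the given real one (which has all entries strictly positive) preserves positivity. Your argument instead invokes Carath\'eodory to pass to an affinely independent subfamily, whereupon the barycentric coordinates are the \emph{unique} solution of a rational linear system and are therefore rational by Cramer's rule.

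Your approach is arguably cleaner in that it produces an explicit rational solution rather than appealing to an approximation, and your alternative formulation via vertices of rational polytopes is essentially a repackaging of the same idea. The paper's approach, on the other hand, avoids invoking Carath\'eodory and relies only on the elementary fact that a rational affine subspace of $\R^m$ has dense rational points; this keeps the argument entirely within basic linear algebra at the cost of a mild topological step. Both are perfectly standard and equally elementary.
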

    \begin{proof}
        Clearly $P_{\mathds{Q}} \subseteq P \cap M_{\Q}$. Regarding the converse, fix $w\in P \cap M_{\Q}$. After replacing $w_i$ by $w_i - w$ we may assume that $w= 0$. Write
        \begin{align*}
            w = \sum_{i=1}^m \lambda^*_i w_i
        \end{align*}
        for some real numbers $0\leq \lambda^*_i$ such that $ \sum_{i=1}^m \lambda^*_i = 1$. Forgetting some of the $w_i$ if necessary, we may assume that $\lambda^*_i>0$ for all $i=1,\ldots, m$.

        Now, consider the rational system of linear equations
        $$
            \left\{ \left. \left(\begin{matrix}
                \lambda_1 \\ \vdots \\ \lambda_m 
            \end{matrix} \right) \hspace{0.1cm}
            \right| \hspace{0.1cm} \sum_{i=1}^m \lambda_i w_i = 0, \hspace{0.1cm} \sum_{i=1}^m \lambda_i = 1
            \right\}.
        $$
        By assumption it admits the real solution $\lambda^* := (\lambda_1^*, \ldots, \lambda_m^*)$ such that $\lambda_i^* > 0$. Let $(\lambda_1, \ldots, \lambda_m)$ be any rational solution which is sufficiently close to $\lambda^*$. Then also $\lambda_i >0$ and so $w = \sum_{i=1}^m \lambda_i w_i$ with rational $\lambda_i$ as required.
    \end{proof}
    \begin{example}
        Continuing  Example \ref{ex:LinearisationsOnP1}, let $\C^{\times}\subseteq \Aut(\CP^1)$ act on $\mathcal{O}(-1)$ via the rule $t\bigcdot (x,y) = (t^{w+1}x, t^wy)$. By Example \ref{ex:LinearisationsOnP1_2} the moment polytope for the induced action of $T$ on $\mathcal{O}(m)$ is given by
        \begin{align*}
            P_{\Gamma}\big(\CP^1, \mathcal{O}(m)\big) = P_{\mu}\big(\CP^1, \mathcal{O}(m)\big) = \Big[-m(w+1), -mw\Big].
        \end{align*}
        We deduce that $\kappa(\CP^1, \mathcal{O}(1))^T \geq 0$ if and only if $w=0$ or $w=-1$.
    \end{example}

    \begin{proposition}
        Let $X$ be a smooth projective variety and let $T\subseteq \Aut(X)$ be an algebraic torus. Choose a fixed point $y\in X^T$ and denote by $\nu_1, \ldots, \nu_n$ the weights of the natural isotropy action of $T$ on the cotangent space $\Omega_X^1|_{y}$.
        \begin{itemize}
            \item[(1)] The weight $\mu$ of the action of $T$ on $\mathcal{O}_X(-K_X)|_y$ satisfies the equation
            \begin{align*}
                \mu = -(\nu_1 + \ldots + \nu_n).
            \end{align*}
            \item[(2)] Let $D$ be a $T$-invariant prime divisor on $X$ and let us denote by $\mu$ the weight of the action of $T$ on $\mathcal{O}_X(-D)|_y$. Then $\mu = \sum_i m_i \cdot \nu_i$ for some non-negative integers $m_i \geq 0$. Moreover:
            \begin{itemize}
                \item[(a)] If $y\notin D$ then $\mu = 0$.
                \item[(b)] If $y\in D$ is a smooth point of $D$ then the natural isomorphism 
                $$\mathcal{O}_X(-D)|_y\cong \mathcal{N}^*_{D/X}|_{y} \subseteq \Omega_X^1|_{y}$$ 
                is compatible with the action of $T$. In particular, $\mu = \nu_i$ for some $i$.
            \end{itemize}
        \end{itemize}\label{ExampleComputationLocalWeights}
    \end{proposition}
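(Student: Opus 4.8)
The plan is to treat both parts as purely local computations at the $T$-fixed point $y$, each of which amounts to reading off the weight of a one-dimensional isotropy representation from a $T$-stable filtration of the stalk $\mathcal{O}_{X,y}$. Throughout I would work in a $T$-invariant affine neighbourhood of $y$ (which exists by \cite[Corollary 5.3.6]{brion_LinearisationsOfGroupActions}) and use only two standard facts: that a finite-dimensional rational representation of a torus is diagonalisable, and that — since $y$ is fixed by $T$ — the $\mathfrak{m}_y$-adic filtration of $\mathcal{O}_{X,y}$ is $T$-stable, with associated graded the polynomial algebra $\operatorname{gr}\mathcal{O}_{X,y}=\Sym^\bullet(\mathfrak{m}_y/\mathfrak{m}_y^2)=\Sym^\bullet(\Omega^1_X|_y)$, on which $T$ acts with weights exactly the $\sum_i m_i\nu_i$, $m_i\in\Z_{\ge 0}$ (in the notation of the proposition).

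For part (1) I would first observe that under the tautological identification $\mathcal{O}_X(-K_X)=\bigwedge^n\mathcal{T}_X$ (top exterior power of the tangent sheaf) the push-forward-of-forms linearisation of Example \ref{ex:LinearisationsOnTheAC} \emph{is} the linearisation of $\bigwedge^n\mathcal{T}_X$ induced by the differential — this is precisely the content of the coordinate formula recalled there, namely $(g\bigcdot\tau)_{g(x)}=\bigwedge^n dg_x(\tau_x)$ for a polyvector field $\tau$. Since $g(y)=y$ for every $g\in T$, the isotropy action on the fibre $\mathcal{O}_X(-K_X)|_y=\bigwedge^n\mathcal{T}_X|_y$ is then $\bigwedge^n dg_y$, i.e.\ the determinant of the isotropy representation on $\mathcal{T}_X|_y$. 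As $\mathcal{T}_X|_y=(\Omega^1_X|_y)^*$ has weights $-\nu_1,\dots,-\nu_n$, its top exterior power has weight $-(\nu_1+\dots+\nu_n)$, which is the asserted value of $\mu$.

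For part (2), since $X$ is smooth $D$ is Cartier near $y$, so I would pick a local equation $h\in\mathcal{O}_{X,y}$ of $D$ at $y$; it is a regular function because $D$ is effective, and it lies in $\mathfrak{m}_y$ precisely when $y\in D$. The stalk $\mathcal{O}_X(-D)_y=h\cdot\mathcal{O}_{X,y}$ is $T$-stable, so $g\bigcdot h=u_g\,h$ for a unit $u_g\in\mathcal{O}_{X,y}^*$, and $\mu$ is by definition the character $g\mapsto u_g(y)$. Passing to the associated graded and comparing degree-$k$ components in $g\bigcdot h=u_g h$ (using $u_g(y)$ as the leading term of $u_g$), the image of $h$ in $\mathfrak{m}_y^k/\mathfrak{m}_y^{k+1}$, where $k:=\operatorname{ord}_y h$, is a non-zero homogeneous eigenvector of weight $\mu$ inside $\Sym^k(\Omega^1_X|_y)$; hence $\mu$ is a sum of $k$ of the $\nu_i$, which gives $\mu=\sum_i m_i\nu_i$ with $m_i\ge 0$. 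If $y\notin D$ then $h$ is a unit, $k=0$, so $\mu=0$, proving (a). If $y$ is a smooth point of $D$ then $h\notin\mathfrak{m}_y^2$, so $k=1$ and $\mu=\nu_i$ for some $i$; moreover the class of $h$ in $\mathfrak{m}_y/\mathfrak{m}_y^2=\Omega^1_X|_y$ equals $dh|_y$ and realises the conormal inclusion $\mathcal{O}_X(-D)|_y\cong\mathcal{N}^*_{D/X}|_y\subseteq\Omega^1_X|_y$, which is $T$-equivariant because $d\big((g^{-1})^*f\big)=g\bigcdot(df)$ by the very definition of the linearisation on $\Omega^1_X$ in Example \ref{ex:LinearisationsOnSheavesOfDivisors}; this is (b).

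I do not expect a deep obstacle: once set up, every step is elementary. The one point that genuinely requires care is the bookkeeping of the several linearisations of Examples \ref{ex:LinearisationsOnTheAC} and \ref{ex:LinearisationsOnSheavesOfDivisors} — in particular carrying the inverse in ``$g\bigcdot f=(g^{-1})^*f$'' through the computation so that the isotropy action on $\mathcal{T}_X|_y$ is the inverse transpose of the one on $\Omega^1_X|_y$, whence their weights are exact negatives — and, relatedly, checking that the identification $\mathcal{O}_X(-D)|_y\cong\mathcal{N}^*_{D/X}|_y$ used in (b) is compatible with the chosen linearisations rather than merely an abstract isomorphism.
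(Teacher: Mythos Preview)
Your argument is correct and reaches the same conclusions as the paper, but the mechanics in part~(2) differ in a way worth noting. The paper first produces a $T$-\emph{eigenvector} local equation $f$ (by diagonalising the finite-dimensional span of $\{t\bigcdot f'\}$ and selecting an eigenvector of multiplicity one along $D$), then invokes Luna's \'etale slice theorem to obtain formal coordinates $z_i$ with $t\bigcdot z_i=t^{\nu_i}z_i$, and reads the weight off the power-series expansion of $f$. You bypass both steps: you work with an arbitrary local equation $h$, encode the weight as the constant term $u_g(y)$ of the unit cocycle in $g\bigcdot h=u_g h$, and then pass to the associated graded $\operatorname{gr}\mathcal{O}_{X,y}\cong\Sym^\bullet(\Omega^1_X|_y)$ to see that the initial form $\bar h\in\Sym^k(\Omega^1_X|_y)$ is an eigenvector of weight $\mu$. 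This is more elementary --- no completion, no slice theorem, no need to manufacture an eigenvector equation --- and the regularity of $\mathcal{O}_{X,y}$ is exactly what makes $\mathfrak{m}_y^k/\mathfrak{m}_y^{k+1}\cong\Sym^k(\mathfrak{m}_y/\mathfrak{m}_y^2)$ hold. The paper's route, on the other hand, actually \emph{produces} a semi-invariant defining equation and eigenvector formal coordinates, which is a slightly stronger intermediate statement even if it is not used elsewhere. Your handling of~(1) and of~(2)(a),(b) matches the paper's in spirit; your explicit attention to the compatibility of linearisations (the inverse in $g\bigcdot f=(g^{-1})^*f$ and the equivariance of $f\mapsto df$) is exactly the bookkeeping the paper leaves implicit.
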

    \begin{proof}
       The first statement is clear as $\mathcal{O}_X(-K_X)|_y = \det(\mathrm{T}X|_{y})$ as $T$-modules.

       Regarding item $(2)$, we claim that there exists a local defining equation $f \in \mathcal{O}_{X, y}$ of $D$ such that 
       \begin{align}
           t \bigcdot f = t^\mu \cdot f, \quad \forall t \in T.\label{eq:ExampleComputationLocalWeights}
       \end{align}
       Indeed, let $f' \in \mathcal{O}_{X, y}$ be any local defining equation. Since the action of $T$ on $\mathcal{O}_{X, y}$ is algebraic, the $\C$-vector space spanned by $\{ t \bigcdot f' | \ t \in T\}$ is finite-dimensional, see \cite[Corollary 4.8]{milne_algebraicGroups}. Let $f_1, \ldots, f_r$ be a basis of $T$-eigenvectors, of weights $\mu_i$ say. Since $t\bigcdot f' = (t^{-1})^*f'$ is a local defining equation of $D$ for any $t\in T$ by Example \ref{ex:LinearisationsOnSheavesOfDivisors}, it follows that $f_i$ vanishes along $D$ for any $i$. On the other hand, since $f'$ is a $\C$-linear combination of the $f_i$, there must exist at least one $f = f_{i^*}$ which has multiplicity precisely one along $D$. Then $f$ is a local defining equation for $D$, i.e.\ $f|_y = f_{i^*}|_y$ generates $\mathcal{O}_X(-D)|_y$. In particular, the weight of the action of $T$ on $\mathcal{O}_X(-D)|_y$ is $\mu_{i^*}$. Thus, $\mu = \mu_{i^*}$, thereby proving (\ref{eq:ExampleComputationLocalWeights}).
       
        Now, it follows from Luna's \'etale slice theorem, c.f.\ for example \cite[Lemma 13.36]{milne_algebraicGroups}, that
        \begin{align*}
            \widehat{\mathcal{O}_{X, y}} \cong \C [\![ z_1, \ldots, z_n ]\!] 
        \end{align*}
        for some formal functions $z_i$ satisfying $t \bigcdot z_i = t^{\nu_i} \cdot z_i$ for any $i = 1, \ldots n$. Express 
        \begin{align*}
            f = \sum_{J \in (\mathds{Z}_{\geq 0})^n} a_J \cdot z^J
        \end{align*}
        as a formal power series. Since $ t \bigcdot f = t^\mu \cdot f$ by (\ref{eq:ExampleComputationLocalWeights}), it follows that $a_J = 0$ for all but possibly those $J = (m_1, \ldots, m_n)$ such that $\sum m_i \cdot \nu_i = \mu$. As $f \neq 0$, we deduce that there exists at least one $J = (m_1, \ldots, m_n) \in (\mathds{Z}_{\geq 0})^n$ such that $\mu = \sum_i m_i \cdot \nu_i$. This proves the assertion.

        Finally, if $y\notin D$ we may take $f = 1$, the constant function. As $t\bigcdot 1 = (t^{-1})^*(1) = 1$ we conclude that $T$ acts trivially on $\mathcal{O}_X(-D)|_y$ as proclaimed.

        Moreover, if $y\in D$ is a smooth point of $D$ then $\mathcal{N}^*_{D/X}|_y \subseteq \Omega^1_{X}|_y$ is a one-dimensional subspace, generated by $df|_y$. We compute
        \begin{align*}
            t \bigcdot df|_y \overset{\textmd{Ex.\ \ref{ex:LinearisationsOnSheavesOfDivisors}}}{=\joinrel=\joinrel=\joinrel=} d(t \bigcdot f)|_y = d(t^\mu \cdot f)|_y = t^\mu df|_y.
       \end{align*}
       Thus the weights of the actions of $T$ on $\mathcal{O}_X(-D)|_y$ and $\mathcal{N}^*_{D/X}|_{y}$ are the same and we are done.
    \end{proof}
    
    The presence of the divisor $A$ in the following Lemma should be understood to be of a purely technical nature. Note that in any case Lemma \ref{EquivariantNonVanishing Ample Case} is no more general than Theorem \ref{Intro:Equivariant Non-Vanishing} as also the pair $(X, D - A)$ is sub-log canonical, see \cite[Corollary 2.35]{kollar_BirationalGeometry}.

    \begin{lemma}
        Let $(X, D)$ be a projective, log smooth, sub-log canonical pair and fix an algebraic torus $T\subseteq \Aut(X, D)$. Let $A \geq 0$ be an effective, $T$-invariant $\mathds{Q}$-divisor. 

        If $-(K_X+ D - A)$ is ample, then $0 \in P_{\mu}(X, -(K_X+ D  - A))$.\label{EquivariantNonVanishing Ample Case}
    \end{lemma}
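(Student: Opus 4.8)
The plan is to reduce everything to the local data at the fixed points of $T$ and a convexity argument. Concretely, since $-(K_X + D - A)$ is ample, by Corollary~\ref{PropertiesSectionMomentPolytope Part 2} it suffices to show that $0$ lies in the moment polytope $P_\mu(X, -(K_X+D-A))$, which (by the very definition of $P_\mu$ and Proposition~\ref{PropertiesSectionMomentPolytope}(1)) means: after passing to a suitable multiple, the weights $\mu_i$ attached to the fixed components $Y_i$ have $0$ in their convex hull. So I would fix one fixed point $y = y_i$ in a component $Y_i$, compute the weight $\mu_i$ of the $T$-action on $\mathcal O_X(-(K_X+D-A))|_{y_i}$ explicitly in terms of the isotropy weights $\nu_1, \dots, \nu_n$ of $T$ on $\Omega^1_X|_{y_i}$ via Proposition~\ref{ExampleComputationLocalWeights}, and then exhibit $0$ as a convex combination of the $\mu_i$'s.

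**Key steps.** First, write $D = \sum a_j D_j$ with $a_j \le 1$ (sub-log canonical) and, using the torus action, note each prime component $D_j$ is $T$-invariant, as is each component of $A$. By Proposition~\ref{ExampleComputationLocalWeights}(1)–(2), at a fixed point $y_i$ with cotangent weights $\nu_1, \dots, \nu_n$ we get
\begin{align*}
\mu_i \;=\; -(\nu_1 + \dots + \nu_n) \;-\; \sum_j a_j \cdot [\text{weight of } \mathcal O_X(-D_j)|_{y_i}] \;+\; [\text{weight of } \mathcal O_X(-A)|_{y_i}],
\end{align*}
where—crucially—log smoothness means that the components $D_j$ through $y_i$ form part of a coordinate SNC system, so after reordering the $\nu$'s, the weight of $\mathcal O_X(-D_j)|_{y_i}$ is exactly one of the $\nu_k$ (by Proposition~\ref{ExampleComputationLocalWeights}(2b)), and distinct components through $y_i$ pick out distinct $\nu_k$. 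Hence $\mu_i = -\sum_k c_k^{(i)} \nu_k + (\text{contribution of } A)$ with each $c_k^{(i)} \in [0,1]$: namely $c_k^{(i)} = a_j$ if $\nu_k$ corresponds to a branch $D_j \ni y_i$ of coefficient $a_j$, and $c_k^{(i)} = 1$ otherwise (since $\mathcal O_X(-K_X)$ contributes $-\nu_k$). The $A$-term is $\sum_k m_k \nu_k$ with $m_k \ge 0$ by Proposition~\ref{ExampleComputationLocalWeights}(2), and since $A$ is effective and shares no component structure obstruction, one still gets $\mu_i = -\sum_k \beta_k^{(i)} \nu_k$ for suitable $\beta_k^{(i)} \le 1$ — but I must be careful here, since adding $A$ could make coefficients exceed... no: $-(K_X+D-A)$ being anti-effective-ish, the relevant point is that $D - A$ still has coefficients $\le 1$, so I should work directly with $D' := D - A$, which is sub-log canonical by \cite[Corollary 2.35]{kollar_BirationalGeometry} as noted, and whose coefficients lie in $(-\infty, 1]$. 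Then $\mu_i = -\sum_k \beta_k^{(i)}\nu_k$ with $\beta_k^{(i)} \le 1$, the $\beta_k^{(i)}$ arising as the multiplicity of $D'$ along the $k$-th coordinate plus $1$ if that plane is not a component of $D'$... I would phrase this cleanly as: $\mathcal O_X(-(K_X+D'))|_{y_i} \cong \det(\mathrm{T}X|_{y_i}) \otimes \bigotimes_k (\text{line of weight } -\beta_k^{(i)}\nu_k)$ reading off coordinate-by-coordinate, using that $K_X + D' = \sum_k (1 - (\text{coeff of } D' \text{ along } z_k)) dz_k$ locally in log-smooth coordinates adapted to $y_i$, with each such coefficient $\le 1$.

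**The convexity argument, and the main obstacle.** With $\mu_i = -\sum_k \beta_k^{(i)}\nu_k$, $\beta_k^{(i)} \le 1$ in hand, I want $0 \in \mathrm{conv}\{\mu_i\}$. The standard trick: suppose not. Then there is a linear functional $\ell \in M_\mathbb{R}^\vee$ with $\ell(\mu_i) > 0$ for all $i$; equivalently, a one-parameter subgroup $\lambda \colon \C^\times \to T$ such that the weight of $\lambda^*\mathcal O_X(-(K_X+D'))$ at every fixed point of $\lambda$ is positive. Now run the Bialynicki-Birula / "flow to the sink" argument: take the limit point $y_0 = \lim_{t \to 0}\lambda(t)\cdot x$ for generic $x$; this $y_0$ is a $\lambda$-fixed point, and computing the isotropy weights of $\lambda$ at $y_0$, the positive-definiteness of the tangent action forces the $\nu_k(\lambda) \le 0$ direction data to conflict with $\ell(\mu_i) > 0$ once one observes $\ell(\mu_{i_0}) = -\sum_k \beta_k^{(i_0)} \ell(\nu_k)$ and at the source all $\ell(\nu_k) \ge 0$ while $\beta_k \le 1$ does not by itself give a sign — so instead I use the sink $y_\infty = \lim_{t\to\infty}$, where all $\ell(\nu_k) \le 0$, hence $\ell(\mu_{i_\infty}) = -\sum_k \beta_k^{(i_\infty)}\ell(\nu_k) \le 0$ (using $\beta_k^{(i_\infty)} \ge 0$, which holds because $D'$ sub-log-canonical log-smooth still has the $\det \mathrm{T}X$ contribution dominating — need $\beta_k \ge 0$, i.e. coefficient of $D'$ along $z_k$ is $\le 1$, which is exactly sub-log-canonicity!). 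This contradicts $\ell(\mu_{i_\infty}) > 0$. I expect the genuinely delicate point to be pinning down that $0 \le \beta_k^{(i)} \le 1$ for the divisor $D' = D - A$ at every fixed point — the upper bound is sub-log-canonicity (this is where the hypothesis is used, matching the remark that log canonicity, not positivity, is the crux), and the lower bound $\beta_k \ge 0$ needs $A$'s effectivity to be absorbed correctly so that the effective part never overwhelms the $\det \mathrm T X$ term; handling the interaction of $A$'s components with $D$'s components sharing coordinate hyperplanes through $y_i$ is the bookkeeping I'd be most careful about. Everything else (reduction to moment polytope, existence of limit points, conjugacy/genericity) is standard.
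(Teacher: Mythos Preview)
Your local computation at each fixed point --- expressing the weight $\mu_i$ of $\mathcal{O}_X(-(K_X+D-A))|_{y_i}$ as $-\sum_k c_k^{(i)}\nu_k$ with all $c_k^{(i)} \geq 0$ --- is exactly the paper's formula~(\ref{eqs:EquivNonVan AmpleCase 1b}), obtained the same way from Proposition~\ref{ExampleComputationLocalWeights}: sub-log canonicity of $D$ gives $1-\delta_k \geq 0$ for the $D$-part and effectivity of $A$ handles the remaining terms. No upper bound $c_k^{(i)} \leq 1$ holds in general or is needed, so your worry about the $A$-contribution ``overwhelming'' the $\det \mathrm{T}X$ term is misplaced; only $c_k^{(i)} \geq 0$ matters.

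Where you diverge from the paper is the convexity step. The paper does \emph{not} run a separating-hyperplane/Bia\l{}ynicki--Birula argument. Instead it invokes the local cone structure of the moment polytope for an \emph{ample} bundle (\cite[Corollary~2.14]{wisniewski_AlgebraicTorusActionsOnContactManifolds}, see~(\ref{eqs:EquivNonVan AmpleCase 1c})): near each vertex $\mu_i$ the polytope $P_\mu$ contains $\mu_i + \sum_k \R_{\geq 0}\,\nu_k$. Combined with $\mu_i = -\sum_k c_k^{(i)}\nu_k$, $c_k^{(i)} \geq 0$, this yields $(1-\varepsilon)\mu_i \in P_\mu$ for all small $\varepsilon > 0$ and every $i$, whereupon the elementary Proposition~\ref{Convex Geometry} forces $0 \in P_\mu$. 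Your route trades this cited structural fact for the existence of a sink of a generic one-parameter subgroup, which is more self-contained; the paper's route is more geometric and makes transparent exactly where ampleness is used.

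One correction to your argument: at $y_\infty = \lim_{t\to\infty}\lambda(t)\cdot x$ the tangent weights for $\lambda$ are all $\leq 0$, hence the cotangent weights satisfy $\ell(\nu_k) \geq 0$, not $\leq 0$ (compare Example~\ref{ex:LogCanonicityIsOptimal}: at $[1:0]$, the sink for $t\to\infty$, the cotangent weight is $+1$). With the correct sign and $c_k^{(i_\infty)} \geq 0$ you do obtain $\ell(\mu_{i_\infty}) = -\sum_k c_k^{(i_\infty)}\ell(\nu_k) \leq 0$, contradicting $\ell(\mu_i) > 0$; but as written your stated premises actually give $\geq 0$, so the displayed reasoning is internally inconsistent even though the conclusion is right. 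Also make explicit that $\lambda$ is chosen generic in the cocharacter lattice so that its fixed locus coincides with $X^T$, ensuring $y_\infty$ is genuinely one of the $y_i$.
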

    \begin{remark}
        Note that in view of item $(1)$ in Proposition \ref{PropertiesSectionMomentPolytope} it makes sense to define the moment polytope $P_{\mu}(X, -(K_X+ D  - A))$ even if $A, D$ are only $\Q$-divisors by choosing $m$ such that $-m(K_X+ D  - A)$ is Cartier and setting
        \begin{align*}
            P_{\mu}\Big(X, -\big(K_X+ D  - A\big)\Big) := \frac{1}{m} P_{\mu}\Big(X, \mathcal{O}_X\big(-m\big(K_X+ D  - A\big)\big)\Big) \subseteq M_{\mathds{R}}.
        \end{align*}
    \end{remark}

    \begin{proof}[Proof (of Lemma \ref{EquivariantNonVanishing Ample Case})]
    Choose $y\in X^T\neq \emptyset$ and let us denote by $\mu \in M_{\mathds{R}}$ the weight of the action of $T$ on $\mathcal{O}_X(-(K_X+ D  - A))|_y$ \footnote{Here, to be precise we should really choose $m\geq 1$ such that $-m(K_X+ D  - A)$ is Cartier, denote the weight of the action of $T$ on $\mathcal{O}_X(-m(K_X+ D  - A))|_y$ by $\mu'$ and set $\mu := \frac{\mu'}{m}$.}. Below, we will prove that 
    \begin{align}
        (1-\varepsilon) \mu \in P_{\mu}\Big(X, {-}\big(K_X+ D  - A\big)\Big)\label{eqs:EquivNonVan AmpleCase 1}
    \end{align}
    for all sufficiently small $\varepsilon \geq 0$. Using some completely elementary convex geometry this immediately yields the result, cf. Proposition \ref{Convex Geometry} below.

     To this end, let us denote the weights of the action of $T$ on $\Omega_X^1|_{y}$ by $\nu_1, \ldots, \nu_n$. As $D$ has SNC-support, we may write
     \begin{align*}
         D = \sum_{i=1}^{s} \delta_i D_i
     \end{align*}
     in some analytic open neighbourhood of $y\in X$ so that the $D_i$ are smooth divisors meeting transversely at $y$ and where $s\leq n$. Then the $\mathcal{N}^*_{D_i/X}|_{y}\subseteq \Omega_X^1|_{y}$ are distinct, one-dimensional, $T$-invariant subspaces. In particular, by Proposition \ref{ExampleComputationLocalWeights} and after possibly re-indexing the $D_i$, we may assume that for any $i=1, \ldots, s$ the weight of the action of $T$ on $\mathcal{N}^*_{D_i/X}|_{y} \subseteq \Omega_X^1|_{y}$ is given by $\nu_i$. Note that $\delta_i\leq 1$ as $(X, D)$ is assumed to be sub-log canonical. Let us set $\delta_i = 0$ for $s+1 \leq i \leq n$. Write $A = \sum a_j A_j$, where, by assumption, $a_j \geq 0$ for all $j$.

     Then by Proposition \ref{ExampleComputationLocalWeights} the weight $\mu$ of the action of $T$ on $\mathcal{O}_X(-(K_X+ D - A))|_y$ satisfies the following equation:
     \begin{align}
         \mu = - \sum_{i=1}^{n} \nu_i + \sum_{i=1}^{n} \delta_i \nu_i - \sum_j a_j m_{j, i} \ \nu_{i} 
         = - \sum_{i=1}^{n} (1-\delta_i) \nu_i  - \sum_{i, j} a_j m_{j,i} \cdot \nu_{i}
         \in M_{\mathds{R}},\label{eqs:EquivNonVan AmpleCase 1b}
     \end{align}
     for some integers $m_{i, j} \geq 0$.

     On the other hand, it is an important fact that if $\mathscr{L}$ is any $T$-linearised ample line bundle on $X$ anf if $\mu$ denotes the weight of the action of $T$ on $\mathscr{L}|_y$ then 
     \begin{align}
         \mu + \varepsilon \nu_i \in P_{\mu}(X, \mathscr{L})\label{eqs:EquivNonVan AmpleCase 1c}
     \end{align}
     for sufficiently small $\varepsilon \geq 0$, see \cite[Corollary 2.14]{wisniewski_AlgebraicTorusActionsOnContactManifolds} for a quick and purely algebraic proof \footnote{In fact, the original statement in \cite[Theorem 3]{GuilleminSternberg_ConvexityPropertiesOfMomentMaps} holds much more generally for any symplectic manifold and even shows that locally around $\mu$, $P_{\mu}(X, \mathscr{L})$ is just the cone spanned by the $\nu_1, \ldots, \nu_n$, see Figure \ref{fig:MomentPolytope} below. Note that unfortunately \cite{GuilleminSternberg_ConvexityPropertiesOfMomentMaps} conventions regarding moment maps differ from ours by a minus sign.}.
     \begin{figure}[h]
         \centering
        \includegraphics[height= 4cm]{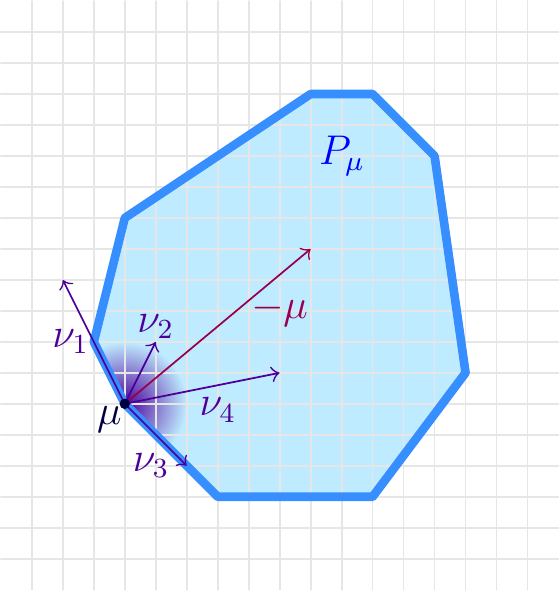}
        \caption{Locally around the vertex $\mu$ the moment polytope $P_\mu$ (drawn in blue) looks like the cone spanned by the weights of the action on the cotangent space (drawn in lilac).}\label{fig:MomentPolytope}
    \end{figure}

    \noindent
    In any case in view of (\ref{eqs:EquivNonVan AmpleCase 1b}) and (\ref{eqs:EquivNonVan AmpleCase 1c}) we see that
    \begin{align}
        (1-\varepsilon) \mu = \mu + \varepsilon \left( \sum_{i=1}^{n} (1-\delta_i) \nu_i  + \sum_{i, j} a_j m_{j,i}\cdot \nu_{i} \right) \in P_{\mu}\Big(X, \mathcal{O}_X\big({-}(K_X+ D  - A)\big)\Big),\label{eqs:EquivNonVan AmpleCase 2}
    \end{align}
    where we use that $\delta_i\leq 1$ as $(X, D)$ is sub-log canonical and $a_j \geq 0$ as $A$ is effective. This proves (\ref{eqs:EquivNonVan AmpleCase 1}) as required. Finally, as was indicated already above, the claimed Lemma \ref{EquivariantNonVanishing Ample Case} now follows immediately from (\ref{eqs:EquivNonVan AmpleCase 1}) and the following elementary consideration:
    \end{proof}

     \begin{proposition}
        Let $V$ be a finite dimensional real vector space, let $W\subseteq V$ be a finite subset and let $P\subseteq V$ be its convex hull. Assume that for any $w\in W$ it holds that $(1-\varepsilon)w \in P$ for all sufficiently small $0\leq \varepsilon$. Then $0\in P$.\label{Convex Geometry}
    \end{proposition}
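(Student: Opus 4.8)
The plan is to argue by contradiction using a separating hyperplane. Suppose $0 \notin P$. Since $W$ is finite, $P$ is a compact convex subset of $V$, so by the separation theorem there is a linear functional $\ell \in V^*$ with $\ell(p) > 0$ for every $p \in P$; concretely one may fix an inner product on $V$, let $p_0$ be the point of $P$ closest to the origin (which exists and is nonzero by compactness), and take $\ell = \langle p_0,\,\cdot\,\rangle$, so that $\ell(p) \geq \langle p_0, p_0\rangle > 0$ for all $p \in P$ by convexity of $P$.

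Next I would locate a vertex of $P$ at which $\ell$ is minimal. As $\ell$ is continuous and $P$ is compact, $\mu := \min_{p \in P} \ell(p)$ is attained and $\mu > 0$. Since $\ell$ is linear and $P$ is the convex hull of $W$, the minimum of $\ell$ over $P$ equals its minimum over $W$: writing any minimiser as a convex combination $\sum_i \lambda_i w_i$ of points of $W$ gives $\mu = \sum_i \lambda_i \ell(w_i) \geq \min_{w \in W} \ell(w)$, while conversely $\ell(w) \geq \mu$ for every $w \in W \subseteq P$. Hence there is some $w_j \in W$ with $\ell(w_j) = \mu$.

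Finally I would invoke the hypothesis applied to $w_j$: there exists $\varepsilon > 0$, which we may assume to be $< 1$, with $(1-\varepsilon) w_j \in P$. But then $\ell\big((1-\varepsilon) w_j\big) = (1-\varepsilon)\mu < \mu$, because $\mu > 0$ and $0 < \varepsilon < 1$. This contradicts the fact that $\mu$ is the minimum of $\ell$ on $P$, so the assumption $0 \notin P$ is impossible, i.e.\ $0 \in P$.

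I do not expect a genuine obstacle; the only point needing a little care is that the separating functional must be chosen strictly positive on all of $P$ rather than merely non-negative, and this is exactly where compactness of $P$ — equivalently, finiteness of $W$ — is used. The remaining ingredient is just the standard fact that a linear functional on a polytope attains its extrema at the vertices.
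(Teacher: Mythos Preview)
Your proof is correct and follows essentially the same route as the paper's: both argue by contradiction, choose a linear functional that is strictly positive on $P$, locate a point of $W$ where this functional attains its minimum, and then derive a contradiction from the hypothesis applied to that point. Your version is slightly more explicit in constructing the separating functional and in justifying that the minimum is attained on $W$, but the argument is the same.
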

    \begin{proof}
        Let us assume that $0\notin P$. Then there exists a linear functional $\varphi$ on $V$ such that $\varphi|_P > 0$. As $P\subseteq V$ is compact and convex so is $\varphi(P)\subseteq \R$, i.e. it is a (compact) interval, say $\varphi(P) = [a,b]$. Note that as $P$ is the convex hull of $W$, $\varphi(P)$ is the convex hull of $\varphi(W)$ and so there exists at least one $w\in W$ for which $\varphi(w) = a$. But then our assumption that $(1-\varepsilon)w \in P$ for all sufficiently small $0<\varepsilon$ contradicts the fact that 
        \begin{align*}
            \varphi\Big((1-\varepsilon)w\Big) = (1-\varepsilon) \cdot \varphi(w) = (1-\varepsilon) \cdot a \notin [a,b].
        \end{align*}
        This completes the proof of the proposition.
    \end{proof}

    \begin{corollary}
        Let $(X, D)$ be a projective, log smooth, log canonical pair and fix an algebraic torus $T\subseteq \Aut(X, D)$. If $-(K_X+ D)$ is nef, then $0 \in P_{\mu}(X, -(K_X+ D))$.\label{Equivariant NonVanishing: Semiample case almost}
    \end{corollary}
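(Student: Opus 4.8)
The plan is to deduce Corollary \ref{Equivariant NonVanishing: Semiample case almost} from the ample case treated in Lemma \ref{EquivariantNonVanishing Ample Case} by a perturbation argument: add an ample correction divisor and let it shrink to zero. Since $(X,D)$ is log smooth and log canonical, it is in particular log smooth and sub-log canonical, so Lemma \ref{EquivariantNonVanishing Ample Case} applies to it directly. \emph{First} I would produce a $T$-invariant, effective, ample $\Q$-divisor on $X$. By Sumihiro's theorem (see \cite{brion_LinearisationsOfGroupActions}) a suitable positive power of any ample line bundle on $X$ carries a $T$-linearisation, so we may fix an ample, $T$-linearised line bundle $L$ on $X$; replacing $L$ by a further power, which does not affect ampleness, we may assume $H^0(X,L)\neq 0$. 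Since $T$ is linearly reductive, $H^0(X,L)$ decomposes into $T$-eigenspaces, hence contains a non-zero $T$-eigenvector $s$. Rescaling a section by a non-zero constant leaves its vanishing locus unchanged, so $A_0:=\{s=0\}$ is a $T$-invariant effective Cartier divisor with $\mathcal{O}_X(A_0)\cong L$ ample. For each rational $\varepsilon>0$ the $\Q$-divisor $A_\varepsilon:=\varepsilon A_0$ is again effective and $T$-invariant, and $-(K_X+D-A_\varepsilon)=-(K_X+D)+\varepsilon A_0$ is ample, being the sum of a nef and an ample $\Q$-divisor class.

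\emph{Next}, applying Lemma \ref{EquivariantNonVanishing Ample Case} with $A=A_\varepsilon$ gives
\begin{align*}
0 \in P_{\mu}\Big(X, {-}(K_X + D - A_\varepsilon)\Big) \qquad \text{for every rational } \varepsilon > 0 .
\end{align*}
With $y_1,\dots,y_c$ the chosen fixed points in the connected components of $X^T$, let $\mu_i\in M_{\R}$ denote the weight of the natural $T$-action on $\mathcal{O}_X(-(K_X+D))|_{y_i}$ and $\alpha_i\in M_{\R}$ the weight on $\mathcal{O}_X(A_0)|_{y_i}$. Because the natural linearisations are multiplicative under tensor products, the weight of $T$ on $\mathcal{O}_X(-(K_X+D-A_\varepsilon))|_{y_i}$ equals $\mu_i+\varepsilon\alpha_i$. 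Hence $P_{\mu}(X,-(K_X+D-A_\varepsilon))$ is the convex hull of $\{\mu_i+\varepsilon\alpha_i:1\leq i\leq c\}$, while $P_{\mu}(X,-(K_X+D))$ is the convex hull of $\{\mu_i:1\leq i\leq c\}$.

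\emph{Finally} I would let $\varepsilon\to 0$. Pick rational $\varepsilon_k\downarrow 0$; for each $k$ we may write $0=\sum_{i=1}^c\lambda_i^{(k)}(\mu_i+\varepsilon_k\alpha_i)$ with $(\lambda_1^{(k)},\dots,\lambda_c^{(k)})$ in the standard simplex. By compactness of the simplex, after passing to a subsequence we have $\lambda_i^{(k)}\to\lambda_i$ with $\lambda_i\geq 0$ and $\sum_i\lambda_i=1$. Passing to the limit yields $\sum_{i=1}^c\lambda_i\mu_i=0$, so $0\in P_{\mu}(X,-(K_X+D))$, which is exactly the assertion. (Rationality of $\varepsilon_k$ is only needed so that $A_{\varepsilon_k}$ is a $\Q$-divisor; any positive $\varepsilon_k$ makes $-(K_X+D)+\varepsilon_k A_0$ ample.)

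I do not expect any genuine obstacle here, since all the geometric substance is already packaged in Lemma \ref{EquivariantNonVanishing Ample Case}. The only two points that deserve a moment's care are the honest $T$-invariance of the perturbing divisor $A_0$ — obtained by choosing $s$ to be a $T$-eigenvector rather than an arbitrary section — and the fact that the moment polytope of each perturbed bundle is obtained from $P_\mu(X,-(K_X+D))$ simply by translating its $i$-th vertex by $\varepsilon\alpha_i$, which is what makes the limiting argument in the last step essentially immediate.
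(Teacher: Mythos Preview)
Your proposal is correct and follows essentially the same approach as the paper's proof: construct a $T$-invariant effective ample divisor from an eigenvector section, apply Lemma \ref{EquivariantNonVanishing Ample Case} to the ample perturbation $-(K_X+D)+\varepsilon A_0$, and then pass to the limit $\varepsilon\to 0$. The only cosmetic difference is that the paper phrases the limiting step via Hausdorff distance between the polytopes $P_\varepsilon$ and $P_0$, while you use compactness of the standard simplex to extract a convergent sequence of coefficient vectors; both arguments are equivalent and your formulation is arguably cleaner.
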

    \begin{proof}
    Let us start by proving the following:\vspace{0.5\baselineskip}
    
    \emph{Claim:} There exists a $T$-invariant, effective, ample divisor $A$ on $X$.\vspace{0.5\baselineskip}
       
        Indeed, let $\mathscr{L}$ be a very ample line bundle on $X$. Then $\mathscr{L}$ is $T$-invariant and, possibly replacing $\mathscr{L}$ by some multiple, even linearisable, see \cite[Theorem 5.2.1]{brion_LinearisationsOfGroupActions}. Let us choose an eigenvector $\sigma \in \mathrm{H}^0(X, \mathscr{L})$ for the action of $T$ of weight $w \in M$. Then the zero-set $A := Z(\sigma)$ is a $T$-invariant, effective, ample divisor on $X$ as required.

        Now, write $A = \sum_j a_j A_j$ as the sum of distinct prime divisors. For any rational number $\varepsilon > 0$ we consider the ample (!) divisor 
        \begin{align*}
            -(K_X + D - \varepsilon A) = -(K_X+ D) + \varepsilon A.
        \end{align*}
        According to Lemma \ref{EquivariantNonVanishing Ample Case} it holds that
        \begin{align*}
            0 \in P_{\varepsilon} := P_{\mu}(X,-(K_X+ D - \varepsilon A)), \quad \varepsilon >0.
        \end{align*}
        We are now ready to conclude that $0 \in P := P_0 := P_{\mu}(X, -(K_X+ D))$: Let us denote by
        \begin{align*}
            d(P_\varepsilon, P) := \max\Big\{\hspace{0.1cm} \sup_{w \in P } d(P_\varepsilon, w), \sup_{v\in P_\varepsilon}d(P, v) \hspace{0.1cm} \Big\} 
        \end{align*}
        the Hausdorff distance between $P_\varepsilon$ and $P$ with respect to some norm on $M_{\mathds{R}}$. Note that by definition and for any $\varepsilon \geq 0$, the polytope $P_{\varepsilon}$ is the convex hull of the weights $\mu_{\varepsilon, k}$ of the action of $T$ on $ \mathcal{O}_X(-(K_X+ D - \varepsilon A))|_{y_k}$. Moreover, by (\ref{eqs:EquivNonVan AmpleCase 1b}) we have
        \begin{align*}
            \mu_{0, k} = \mu_{\varepsilon, k} + \varepsilon \left( \sum_{i, j} a_{j} m^{(k)}_{j, i} \cdot \nu_{i} \right)
        \end{align*}
        for some integers $m^{(k)}_{j, i} \geq 0$, depending only on the weights of the action of $T$ on $\mathcal{O}_X(-A_j)|_{y_k}$ but not on $\varepsilon \geq 0$, c.f.\ Proposition \ref{ExampleComputationLocalWeights}. We infer that clearly $d(P_\varepsilon, P_0)\rightarrow 0$ as $\varepsilon \rightarrow 0$. In particular, $d(P_0, 0) \leq d(P_\varepsilon, P_0)\rightarrow 0$ and so $0\in P = P_0$, using that the latter set is closed.
    \end{proof}

    \begin{proof}[Proof (of Theorem \ref{Equivariant Non-Vanishing: Semi ample case})]
        Follows immediately by simply combining  Corollary \ref{Equivariant NonVanishing: Semiample case almost} with Proposition \ref{PropertiesSectionMomentPolytope}.
    \end{proof}

    \noindent
    The rest of this section is devoted to giving examples which demonstrate that the assumptions in Theorem \ref{Equivariant Non-Vanishing: Semi ample case}, and hence in Theorem \ref{Intro:Equivariant Non-Vanishing}, are essentially optimal.
    \begin{example}
        The conclusion of Theorem \ref{Equivariant Non-Vanishing: Semi ample case} fails if we allow for more general subgroups $H \subseteq \Aut(F, \Delta_F)$. In fact, the following example shows that it is already wrong for solvable groups:

        Let $X = \CP^1$, $D = 0$ and let $\C^\times \cong T\subseteq \textmd{PGL}_2(\C)$ be the group of diagonal matrices as before. Then one easily checks that 
        \begin{align*}
            \bigoplus_{m= 0}^\infty H^0(X, \mathcal{O}_X(-mK_X))^T = \C[z \partial_z]
        \end{align*}
        is generated by a single section. However, the vector field $z \partial_z$ is \emph{not} invariant under the action of the subgroup $B\subseteq \textmd{PGL}_2(\C)$ of upper triangular matrices.
        \label{ex:EquivKodDim-Infty}
    \end{example}
    \begin{example}
        The above example shows that usually $\kappa(X, -(K_X + D))^T$ is strictly smaller than $\kappa(X, -(K_X + D))$ even when the assumptions in Theorem \ref{Equivariant Non-Vanishing: Semi ample case} are satisfied. In fact, it is straightforward to extend Example \ref{ex:EquivKodDim-Infty} to show that
        \begin{align*}
            0 = \kappa\big(\CP^n, -K_{\CP^n}\big)^T < \kappa\big(\CP^n, -K_{\CP^n}\big) = n,
        \end{align*}
        for any $n\geq 1$. Here, $T\subseteq \Aut(\CP^n) = \textmd{PGL}_{n+1}(\C)$ denotes the torus of diagonal matrices.\label{ex:InvariantKodDimSmallerThanKodDim}
    \end{example}
    \begin{example}
        The assumption that $(X, D)$ is sub-log canonical is optimal as the following example demonstrates: Consider $X = \CP^1$ equipped with $D = a\cdot \{0\}$ for some $a\in \Q$. Then the standard torus of diagonal matrices $T \cong \C^\times \subseteq \Aut(\CP^1)$ acts on $\CP^1$ via $t\bigcdot [x:y] = [tx:y]$. Note that the weight of the action of $T$ on $\mathcal{T}_{\CP^1}|_0$ is given by $1\in \Z\cong M$ and the weight of the action of $T$ on $\mathcal{T}_{\CP^1}|_\infty$ is given by $-1\in M$. Consequently, the weights of the action of $T$ on $\Omega^1_{\CP^1}|_0$ and $\Omega^1_{\CP^1}|_\infty$ are given by $-1$ and $1$ respectively. Thus, $P_{\mu}(\CP^1, \mathcal{O}(-K_{\CP^1})) = [-1, 1]$ and, hence, using Example \ref{ExampleComputationLocalWeights}
        \begin{align*}
            P_{\mu} \Big( \CP^1, -\big( K_{CP^1} + D \big) \Big) = \Big[-1+a, \hspace{0.1cm} 1\Big].
        \end{align*}
         We conclude that $-(K_{\CP^1}+D)$ has $T$-invariant sections if and only if $a\leq 1$ which is the case if and only if $(X, D)$ is sub-log canonical. At the same time, notice that $-(K_{\CP^1}+D)$ is ample as long as $a\leq 2$.\label{ex:LogCanonicityIsOptimal}
    \end{example}

     % ------------------------------------------------------------------------------------ %												%                       Section 3 - Proof of the main results		                   %
    % ------------------------------------------------------------------------------------ %

 \section{Proof of the main results}
 \label{sec:Proof Of Main Results}

 \subsection{Proof of Theorem C}

    Let $(X, D)$ be a projective, sub-log canonical pair and assume that $-(K_X + D)$ is semiample. Fix a connected, commutative, linear algebraic subgroup $H\subseteq \Aut(X, D)$. We need to show that
    \begin{align*}
        \kappa(X, -(K_X + D))^H\geq 0.
    \end{align*}
    If $(X, D)$ is log smooth, then this was already shown in Theorem \ref{Equivariant Non-Vanishing: Semi ample case}, c.f.\ also Lemma \ref{InvariantsCommutativeGroups}. In general, we choose an $H$-equivariant log resolution $f\colon \hat{X} \rightarrow X$ of $(X, D)$. Such a map always exists by  \cite[Proposition 3.9.1]{kollar_ResolutionOfSingularities}. Then, in particular $\textmd{Exc}(f)$ will be $H$-invariant.

    Let us write
     \begin{align}
         K_{\hat{X}} + \hat{D} \sim_{\mathds{Q}} f^*(K_X + D), \label{eqs:EquivNonVan Singular Semiample Case}
     \end{align}
     where $\hat{D}$ is supported on the strict transform of $D$ and the exceptional locus of $f$. Then $(\hat{X}, \hat{D})$ is a log smooth pair which is sub-log canonical as $(X, D)$ is so, see \cite[Lemma 2.30]{kollar_BirationalGeometry}. Also, by construction, 
     \begin{align*}
        -\big(K_{\hat{X}} + \hat{D} \big) \sim_{\mathds{Q}} -f^*(K_X + D),
     \end{align*}
     is semiample. Thus the conditions in Theorem \ref{Equivariant Non-Vanishing: Semi ample case} are satisfied and we deduce that
     \begin{align*}
         \kappa\left(\hat{X}, {-} \left(K_{\hat{X}} + \hat{D} \right)\right)^H \geq 0.
     \end{align*}
     Let us fix an integer $m\geq 1$ such that $-m(K_{\hat{X}} + \hat{D} )$ is Cartier and such that there exists an $H$-invariant form
     \begin{align*}
         \hat{\tau} \in \hspace{0.1cm }& H^0\Big(\widehat{X}, \mathcal{O}_{\widehat{X}}\Big({-}m\big(K_{\hat{X}} + \hat{D} \big)\Big)\Big)\\
         \cong \hspace{0.1cm }& H^0\Big(\widehat{X}, \mathcal{O}_{\widehat{X}}\big(f^*({-}m(K_X + D))\big)\Big) \\
         = \hspace{0.1cm }& H^0\big(X, \mathcal{O}_{X}\big({-}m(K_X + D)\big)\big).
     \end{align*}
     Here, for the isomorphism in the second line we used (\ref{eqs:EquivNonVan Singular Semiample Case}). Let us denote by $\tau \in H^0(X, \mathcal{O}_{X}(-m(K_X + D)))$ the image of $\hat{\tau}$ under this isomorphism. We claim that $\tau$ is $H$-invariant, concluding the proof of the Lemma. Note that this is not entirely clear as the natural $H$-actions on both sides may a priori differ.

     However, as $f\colon \hat{X} \rightarrow X$ is birational and $H$-equivariant, the set $U := \hat{X} \setminus (\hat{D} \cup \textmd{Exc}(f))$ is a dense open $H$-invariant subset of $\hat{X}$ on which $f$ is an isomorphism. In particular, we see that
     \begin{align*}
         \hat{\tau}|_U = \tau|_U \in H^0(U, \mathcal{O}_X(-K_U)).
     \end{align*}
     As $\hat{\tau}$ is $H$-invariant we infer that so is $\hat{\tau}|_U = \tau|_U$ and, hence, (by continuity) $\tau$. Thus, we have produced a non-trivial $H$-invariant form $\tau \in H^0(X, \mathcal{O}_{X}(-m(K_X + D)))$ and we are done.\hfill $\square$

 \subsection{Proof of Theorem A}

 Theorem \ref{Intro:Anti-Non-Vanishing} is an immediate consequence of Theorem \ref{Intro:Equivariant Non-Vanishing} and Theorem \ref{criterionNonVanishing}.\hfill $\square$

 \subsection{Proof of Corollary B}
Let $(X, \Delta)$ a projective klt threefold pair with nef anti-log canonical divisor $-(K_X+\Delta)$. We want to prove that
\begin{align}
    \kappa(X, -(K_X + \Delta))\geq 0\label{eqs:NonVanishing}.
\end{align}
Let us start with some preliminary considerations. Let us denote by $(F, \Delta_F)$ a general fibre of the MRC fibration of $(X, \Delta)$ as in Theorem \ref{StructureVarietiesNefACBundle}. Then $F$ is a rationally connnected projective variety, $(F, \Delta_F)$ is a klt pair and $-(K_F + \Delta_F)$ is nef. We distinguish four cases according to whether $\dim F = 0,1,2,3$.

If $\dim F = 3$, then $X=F$ and (\ref{eqs:NonVanishing}) follows from \cite[Theorem A]{lazic_NumericalNonVanishing}. In the other cases, according to Theorem \ref{criterionNonVanishing} we need to prove that for any algebraic torus $T\subseteq \Aut(F, \Delta_F)$ it holds that
\begin{align}
    \kappa(F, -(K_F + \Delta_F))^T\geq 0\label{eqs:NonVanishing2}.
\end{align}
In case $\dim F = 0$, i.e.\ if $F = \{ * \}$ is a point, there is nothing to prove, c.f.\ Theorem \ref{StructureVarietiesNefACBundle}. Also the case $\dim F = 1$ is easy to settle, for example because in this case clearly $F= \CP^1$. Then since $-(K_F + \Delta_F)$ is nef it is even semiample and (\ref{eqs:NonVanishing}) is a direct consequence of Theorem \ref{Intro:Anti-Non-Vanishing}. Thus, it remains to deal with the case when $F$ is a surface.

So let us then assume that $F$ is a surface. If $T = \{1\}$ is trivial, then we simply need to verify that
\begin{align*}
    \kappa(F, -(K_F + \Delta_F))\geq 0,
\end{align*}
which was proved in \cite[Theorem 1.5]{han_NumericalNonvanishingSurfacPairs}. Alternatively, this statement is also contained in \cite[Theorem A]{lazic_NumericalNonVanishing}. In any case, note that this is precisely the situation when $X = Y \times F$ is a product.

It remains to deal with the case that $\dim T \geq 1$. Proceeding exactly as in the proof of Theorem \ref{Intro:Equivariant Non-Vanishing} we may replace $F$ by a log resolution. Note that this may force us to allow for $(F, \Delta_F)$ to be only \emph{sub}-klt but this will not be an issue henceforth. In any case, we may assume $F$ to be smooth.

Now, as $\dim T \geq 1$, the surface $F$ is certainly a \emph{$T$-variety of complexity one}, i.e.\ a variety equipped with the action of an algebraic torus such that the maximal dimensional orbits have codimension one in $F$. But then $F$ is an \emph{Mori Dream Space} in the sense of \cite{Hu_MoriDreamSpaces}, so that $-(K_F + \Delta_F)$ is not only nef but also semiample. Hence, (\ref{eqs:NonVanishing}) follows from Theorem \ref{Intro:Equivariant Non-Vanishing}. 

The assertion that $F$ is a Mori Dream Space under the above hypothesis seems to be well-known, for the convenience of the reader we nevertheless provide a detailed explanation below: As $F$ is a smooth, rationally connected surface, it is in particular rational, i.e.\ birational to $\mathds{P}^2$. It follows that the class group $\mathrm{Cl}(F)$ is finitely generated. Thus, to prove that $F$ is a Mori Dream Space it only remains to show that the Cox ring of $F$ is finitely generated, see \cite[Proposition 2.9]{Hu_MoriDreamSpaces}. But indeed, Hausen and Süß in \cite[Theorem 1.3]{hausen_CoxRingVarietieswithTorusAction} can even determine an explicit finite list of generators and relations in terms of combinatorial data associated to the $T$-action. The theorem is proved.\hfill $\square$

\subsection*{Acknowledgment}
First and foremost, the author would like to express his sincere gratitude towards Jarosław Wiśniewski for sketching to the author the proof of Theorem \ref{Intro:Equivariant Non-Vanishing} in the case of a smooth Fanos without boundary. This paper would not exist without him. The author would also like to take this opportunity to warmly thank Roland Púček and Andriy Regeta for organising the `Regional Workshop in Algebraic Geometry' at Jena where the author and Jarosław Wiśniewski met.

Second, the author is extremely grateful towards the authors of \cite{lazic_NumericalNonVanishing} for bringing up the question whether Theorem \ref{Intro:Anti-Non-Vanishing} is true and, in particular, Shin-Ichi Matsumura and Thomas Peternell for several inspiring and fruitful initial discussions on the issue. Moreover, he is indebted to Vladimir Lazi\'c and to Nikolaos Tsakanikas for many helpful comments on an early draft of this text and especially to Nikolaos Tsakanikas and Lingyao Xie for pointing out a serious mistake in an earlier version of this text. 

He would also like to thank Daniel Greb for his advice and for always trying to make the author see the positive side of things, and Jochen Heinloth for several helpful discussions on torus-actions and for always being glad to answer any question of anyone stumbling by his office. Finally, he would like to warmly thank the anonymous referee for their detailed review and their valuable suggestions.

The author gladly wants to acknowledge that while writing this article he was supported from the DFG Research Training Group 2553 `\emph{Symmetries and Classifying Spaces: Analytic, Arithmetic and Derived}'.

\def\refname{References}

\end{document}